 \newtheorem{thm}{Theorem}[section]
 \newtheorem{cor}[thm]{Corollary}
 \newtheorem{lem}[thm]{Lemma}
 \newtheorem{prop}[thm]{Proposition}
 \theoremstyle{definition}
 \theoremstyle{remark}
 \newtheorem{rem}[thm]{Remark}
 \newtheorem{ex}{Example}
 \numberwithin{equation}{section}
\newcommand{\R}{{\ensuremath{\mathbb{R}}}}
\newcommand{\pr}{{\ensuremath{\mathbb{P}}}}
\newcommand{\calr}{\mathcal{R}}
\newcommand{\J}{{\ensuremath{\mathcal{J}}}}
\renewcommand{\Im}{\mathrm{Im\,}}
\newcommand\ST{\rule[-1em]{0pt}{1.49em}}
\newcommand\TT{\rule[-0.5em]{0pt}{1.0em}}
\begin{document}
%-------------------------------------------------------------------------
% editorial commands: to be inserted by the editorial office
%
%\firstpage{1}
%\issuenumber{1}
%\Volumeandyear{1 (2004)}
%\Copyrightyear{2004}
%\DOI{003-xxxx-y}
%\Signet
%\commby{inhouse}
%\submitted{March 14, 2003}
%\received{March 16, 2000}
%\revised{June 1, 2000}
%\accepted{July 22, 2000}
%
%---------------------------------------------------------------------------
%Insert here the title, affiliations and abstract:
%
\title[Bounds on the dimension of trivariate spline spaces]
 {Bounds on the dimension of trivariate spline spaces: \\
 A homological approach}
%----------Author 1
\author{Bernard Mourrain}

\address{%
Inria Sophia Antipolis M\'editerran\'ee\\
2004 route des Lucioles, B.P.~93\\
06902 Sophia Antipolis, France}

\email{Bernard.Mourrain@inria.fr}

%----------Author 2
\author{Nelly Villamizar}
\address{Johann Radon Institute for Computational and Applied Mathematics (RICAM)\br
Austrian Academy of Sciences\br
Altenberger Stra{\ss}e 69\br
4040 Linz, Austria }
\email{nelly.villamizar@ricam.oeaw.ac.at}
%----------classification, keywords, date
%\subjclass{Primary 99Z99; Secondary 00A00}

\keywords{Splines, dimension, bounds, tetrahedral partitions, Hilbert function, Fr\"oberg's conjecture, Ideals of powers of linear forms}

\date{October 15, 2013}
%----------additions
%\dedicatory{}
%%% ----------------------------------------------------------------------

\begin{abstract} 
We consider the vector space of globally differentiable piecewise polynomial functions defined on a three-dimensional polyhedral domain partitioned into tetrahedra. 
We prove new lower and upper bounds on the dimension of this space by applying homological techniques. We give an insight into different ways of approaching this problem by exploring its connections with the Hilbert series of ideals generated by powers of linear forms, fat points, the so-called Fr\"oberg--Iarrobino conjecture, and the weak Lefschetz property.
\end{abstract}

%%% ----------------------------------------------------------------------
\maketitle
%%% ----------------------------------------------------------------------

\section{Introduction}
A \textit{spline} is a function which is conformed by pieces of polynomials defined on a rectilinear partition of a domain in the $d$-dimensional real space, and joined together to ensure some degree of global smoothness.
For a tetrahedral partition $\Delta$ of a domain embedded in $\R^3$, we denote by $C_k^r(\Delta)$ the space of {splines} or piecewise polynomial functions  of degree less than or equal to $k$ defined on $\Delta$, with global order of smoothness $r$ ($\geq 0$). This set is a vector space over $\R$. We usually refer to it as the space of $C^r$ trivariate splines of degree $k$ on $\Delta$.

Trivariate spline spaces are important tools in approximation theory and numerical analysis; they have been used, for instance, to solve boundary value problems by the finite-element method (FEM) \cites{zlamal,Zen} (see also \cite{LaiSchu} and the references therein). 
More recently, they have became highly effective tools in Isogeometric Analysis \cite{hughes}, 
which is a recently developed computational approach combining the exact topology description in finite element analysis, with the accurate shape representation in Computer Aided Design (CAD). 
In these areas of application, finding the dimension for $C_k^r(\Delta)$ on general tetrahedral partitions is a major open problem. It has been studied using Bernstein--B\'ezier methods in articles by Alfeld, Schumaker, et al. \cites{ASW,AfSS,boundsTriva}. The results in these papers do not take into account the geometry of the faces surrounding the interior edges or interior vertices.  A variant of that approach  by Lau  \cite{lau}, gives a lower bound for simply connected tetrahedral partitions; the formula, although it contains a term which takes into account the geometry of faces surrounding interior edges, lacks of the term involving the number of interior vertices. This  frequently makes the lower bound much smaller than the one presented in \cite{ASW}. In general, an arbitrary small change of the location of the vertices can change the dimension of $C_k^r(\Delta)$. Thus, finding an accurate way of including conditions in the formulas regarding the geometry of the configuration is one of the main open problems in trivariate spline theory.

The use of homological algebra in solving the dimension problem on spline spaces dates back to 1988, \cite{bil}. In this article, Billera considers triangulated $d$-dimensional regions in $\R^d$; with his approach he gave the first proof of the generic dimension of the space of $C^1$ bivariate splines. In \cite{family}, Schenck and Stillman introduced a different chain complex which, in the bivariate settings, leads %also 
to prove a new formula for the upper bound on the dimension of the space, generalizing the bounds already known \cite{LaiSchu}, and also yielding a simple proof of the dimension formula for the space of $C^r_{k}$ splines with degree $k\geq 4r+1$ (\cite{bn1}).  
Our aim in this paper is to apply the homological approach to approximate the dimension of trivariate spline spaces.

The formulas we present, apply to any tetrahedral partition $\Delta$, any degree $k$, and any order of global smoothness $r$. They include terms that depend on the number of different planes surrounding the edges and vertices in the interior of $\Delta$.  The main contribution of this paper are the new formulas for lower and upper bounds on the dimension of the space of $C^r$ trivariate splines. These bounds represent an improvement with respect to previous results in the literature. Moreover, the construction 
and results 
we present throughout the paper give an insight into ways of improving the bounds and finding the exact dimension under certain conditions. 

The paper is organized as follows. In Section \ref{construction}, we
recall the chain complex of modules  proposed by Schenck \cite{Sck}, in the case of a three-dimensional simplicial complex.
The Euler characteristic equation for such chain complex yields a formula for $\dim C_k^r(\Delta)$ in terms of the dimension of the modules and homology modules of the complex. In order to get an explicit expression for the dimension, in terms of known information on the subdivision, there are two important aspects to consider.
Firstly, we need to analyze ideals generated by powers of linear forms in three variables, and no general resolution is known  for this kind of ideals as it was in the case of ideals in two variables \cite{family}. Secondly, it is also necessary to determine kernels of maps to compute the dimension of the two homology modules appearing in the formula. These two problems are considered as follows. 
In  Section \ref{dimJ0}, by using the Fr\"oberg sequence associated to
an ideal generated by homogeneous polynomials of prescribed degrees,
we obtain a formula that approximates the dimension of ideals
generated by powers of linear forms. We discuss the cases where this
formula gives the exact dimension, and some other formulas that can be
used to get better bounds when the number of linear forms are less
than nine. This discussion covers the relationship between splines and fat points \cite{iarrobino}, and connections of this theory with the Weak Lefschetz Property, Hilbert series of ideals of powers of generic linear forms, and Fr\"oberg's conjecture and its most recent versions. In Section \ref{bounds1}, we 
rewrite the  dimension formula obtained in Section \ref{construction} and prove an explicit upper bound on $\dim C_k^r(\Delta)$ for any $\Delta$. Additionally, we prove an upper bound that can be applied in the free case (when the first two homology modules vanish). Similarly, a lower bound is proved in Section \ref{bounds2}. Finally, we conclude with some examples in Section \ref{examples}.

The approaches we make to the problem differ from the ones used before, see \cite{LaiSchu} and the references therein, hence enriching the tools to confront the problem, and thus to develop the theory of trivariate splines. 

\section{Construction of the chain complex}\label{construction}

The notations and definitions we present in this section appear in \cite{Local} for a finite $d$-dimensional simplicial complex, here we restrict them to the trivariate case. 

For a 3-dimensional simplicial complex  $\Delta$, supported on $|\Delta|\subset\R^3$, such that $|\Delta|$ is homotopy equivalent to a  3-dimensional ball, let $\Delta^0$ and $\Delta_{i}^0$ ($i=0,1,2,3$) be the set of interior faces, and  $i$-dimensional interior faces of $\Delta$  whose support is not contained in the boundary of  $|\Delta|$. Denote by $f^0$ and $f_i^0$ ($i=0,1,2,3$)  the cardinality of these sets, respectively. Let $\partial\Delta$ be the boundary complex consisting of all $2$-faces lying on just one $3$-dimensional face as well as all subsets of them.

We will study the dimension of the vector space $C_k^r(\Delta)$ by studying its ``homogenization" defined on a similar partition but in one dimension higher. Let us denote by $\hat\Delta$ the simplex obtained by embeding $\Delta$ in the hyperplane $\{w=1\}\subseteq\R^4$ forming a cone  over $\Delta$ with vertex at the origin. Denote by $C^r(\hat{\Delta})_k$ the set of $C^r$ splines on $\hat{\Delta}$ of degree exactly $k$. Then $C^r(\hat{\Delta}):=\oplus_{\geq 0}C^r(\hat{\Delta})_k$ is a  graded $\R$-algebra. Furthermore, the elements of $C^r(\hat\Delta)_k$ are the homogenization of the elements of $C_k^r(\Delta)$, so there is a vector space isomorphism between $C_k^r(\Delta)$ and $C^r(\hat\Delta)_k$, and  
in particular
\begin{equation}\label{dimensionequality}
\dim C_k^r(\Delta)=\dim C^r(\hat{\Delta})_k.
\end{equation}
Thus, to study the dimension of the space $C_k^r(\Delta)$ it suffices to study the Hilbert series of the module $C^r(\hat\Delta)$, which is a finitely generated graded module.  

Let us define $R := \R[x, y, z, w]$. For every $2$-dimensional face $\sigma\in\Delta_2^0$, let $\ell_\sigma$ denote the homogeneous linear form vanishing on $\hat\sigma$ (this is just the homogenization of the linear equation vanishing on $\sigma$),
and define the chain complex $\J$ of ideals of $R$  as follows:
\begin{align*}
\J(\iota)&=\langle 0\rangle&&\text{for each} \;\iota\in\Delta_3^0\\
\J(\sigma)&=\langle \ell_{\sigma}^{r+1}\rangle &&\text{for each} \;\sigma\in\Delta_2^0\\
\J(\tau)&=\langle \ell_{\sigma}^{r+1}\rangle_{\sigma\ni\tau} &&\text{for each} \;\tau\in\Delta_1^0, \, \sigma\in\Delta_2^0\\
\J(\gamma)&=\langle \ell_{\sigma}^{r+1}\rangle_{\sigma\ni\gamma}&&\text{for each} \;\gamma\in\Delta_0^0, \,\sigma\in\Delta_{2}^0.
\end{align*}
Let $\mathcal{R}$ be the constant complex on $\Delta^0$,  defined by
$\mathcal{R}(\beta):=R$ \, for every $\beta\in\Delta^0$. Take
$\partial_i$ for $i=1,2,3$ to be the simplicial boundary maps from
$\calr_i\to\calr_{i-1}$ relative to $\partial\Delta$, where
$\calr_i:=\oplus_{\beta\in\Delta_i^0}\calr(\beta)$. Then
$H_i(\calr)=H_i(\Delta,\partial\Delta;R)$ is the relative simplicial
homology module, with coefficients in $R$. Let us consider the chain
complex $\calr/\J$ defined as the  quotient of $\calr$ by $\J$ ($\calr/\J(\beta)= \calr(\beta)/\J(\beta)$):
\vspace{0.1cm}
{
\[0\rightarrow\bigoplus_{\iota\in\Delta_3}\calr(\iota)\xrightarrow{\partial_3}\bigoplus_{\sigma\in\Delta_2^0}\mathcal{R}/\J(\sigma)\xrightarrow{\partial_2}\bigoplus_{{\tau\in\Delta}_1^{0}}\mathcal{R/J}(\tau)\xrightarrow{\partial_1}\bigoplus_{{\gamma\in\Delta}_0^{0}}\mathcal{R/J}(\gamma)\rightarrow 0\]}\vspace{0.1cm}where the maps $\partial_i$ are induced by the maps on $\calr_i$.
The top homology module  \[H_{3}(\mathcal{R/J}):=\ker(\partial_3)\]
is precisely the module $C^{r+1}(\hat\Delta)$ \cite{Sck}. Thus, by (\ref{dimensionequality}) we have that $\dim C_k^r(\Delta)=\dim H_3(\calr/\J)_k$, and the Euler characteristic equation applied to $\calr/\J$
\[\chi(H(\mathcal{R/J}))=\chi(\mathcal{R/J})\]
leads to the formula:
\begin{align}\label{firstdimformula}
\dim C^r_k(\Delta)=\sum_{i=0}^3 (-1)^i\hspace{-0.1cm}
\sum_{\beta\in\Delta_{3-i}^0}\dim \mathcal{R}/\mathcal{J}(\beta)_k + \dim H_{2}(\calr/\J)_k - \dim H_{1}(\calr/\J)_k.
\end{align}
The subindex $k$ means that we are considering the modules in degree exactly $k$. The goal is to determine the dimension of the modules in the previous formula as functions of known information about the subdivision $\Delta$.

Let us consider the short exact sequence of complexes
\[0\longrightarrow\mathcal{J}\longrightarrow\mathcal{R}\longrightarrow\mathcal{R/J}\longrightarrow 0.\]
It gives rise to the following long exact sequence of homology modules
\begin{align}\label{longsequence}
0\rightarrow &H_3(\mathcal{R})\rightarrow H_3(\mathcal{R}/\J)\rightarrow H_2 (\J)\rightarrow H_2(\mathcal{R})\rightarrow H_2(\mathcal{R}/\J)\rightarrow H_1(\J)\\
&\hspace{0.3cm}\rightarrow H_1(\mathcal{R})\rightarrow H_1 (\mathcal{R}/\J)\rightarrow H_0(\J)\rightarrow H_0(\mathcal{R})\rightarrow H_0 (\mathcal{R}/\J)\rightarrow 0\nonumber
\end{align}
Since by hypothesis $\Delta$ is supported on a ball, $H_i(\calr)$ is nonzero only for $i=3$ \cite{hatcher}*{Chapter 2}. Thus, by the long exact sequence (\ref{longsequence}), we have the following:
\begin{enumerate}[(i)]
\item \ST $H_{0}(\mathcal{R/J})=0$,
\item \ST $H_{1}(\mathcal{R/J})\cong H_{0}(\mathcal{J})$,\label{h1isoh0}
\item \ST $H_{2}(\mathcal{R/J})\cong H_{1}(\mathcal{J})$,
\item  $H_3(\calr)=R$, and hence $C^r(\Delta)\cong R\oplus H_2(\J)$. \label{iso4}
\end{enumerate}
Here the notation ``$A\cong B$" means that $A$ and $B$ are isomorphic as $R=\R[x,y,z,w]$ modules.
Let us notice that, in particular, the isomorphism in (\ref{iso4}) says that the study of the spline module reduces to the study of $H_2(\J)=\ker \partial_2$.

The complex of ideals $\J$, as defined above, is given by
\begin{equation}\label{Jcomplex}
0\longrightarrow\bigoplus_{\sigma\in\Delta_2^0}\J(\sigma)\xrightarrow{\partial_2}\bigoplus_{\tau\in\Delta_1^0}\J(\tau)\xrightarrow{\partial_1}\bigoplus_{\gamma\in\Delta_0^0}\J(\gamma)\longrightarrow 0
\end{equation}
where
$\partial_i$ are the restriction of the maps from the chain complex $\calr$.
Let us denote  $K_i:=\ker(\partial_i)$ and  $W_i:=\Im(\partial_{i+1})$ for $i=0,1$. Then, the homology modules $H_0(\J)$ and $H_1(\J)$ are by definition
\begin{align*}
H_0(\J)&:=\bigoplus_{\gamma\in\Delta_0^0}\J(\gamma)/W_0\\
H_1(\J)&:=K_1/W_1.
\end{align*}
By the short exact sequence
\[0\longrightarrow K_1\longrightarrow\bigoplus_{\tau\in\Delta_1^0}\J(\tau)\xrightarrow{\hspace{0.1cm}\partial_1\hspace{0.1cm}}W_0\longrightarrow 0,\vspace{0.1cm}\]
and the fact that $|\Delta|$ is homotopic to a  ball (and hence the Euler characteristic of $\Delta$ is equal to 1), we can rewrite (\ref{firstdimformula}) as follows,
\begin{align}
\dim C^r_k(\Delta)&=\sum_{i=0}^3 (-1)^i\hspace{-0.2cm}
\sum_{\beta\in\Delta_{3-i}^0}\hspace{-0.3cm}\dim \mathcal{R}/\mathcal{J}(\beta)_k+ \dim H_{1}(\J)_k - \dim H_{0}(\J)_k\label{withJ}\\
&=\dim R_k+\dim \bigoplus_{\sigma\in\Delta_{2}}\J(\sigma)_k -\dim (W_{1})_k.\label{secondeq}
\end{align}
In the following theorem we collect two results from \cite{Sck}, restricting ourselves to the trivariate case.
\begin{thm}[Schenck, \cite{Sck}]\label{schencktheorem}
Assume that $\Delta$ is a topological $3$-ball.
\begin{enumerate}[(1)]
\item \TT The homology module
$H_i(\calr/\J)$ has dimension $\leq i-1$ for all $i\leq 3$.\label{H_0}
\item The module $C^r(\hat\Delta)$ is free if and only if $H_1(\J)=H_0(\J)=0$. In that case, the Hilbert series of $C^r(\hat\Delta)$ is determined by local data, i.e., by the Hilbert series of the various $\calr/\J(\beta)$, $\beta\in\Delta_i^0$.\label{free-cond}
\end{enumerate}
\end{thm}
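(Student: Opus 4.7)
My plan is to derive both statements from the long exact sequence (\ref{longsequence}) together with a localization analysis of the ideals $\mathcal{J}(\beta)$.

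\emph{Part (1).} The isomorphisms $H_0(\mathcal{R}/\mathcal{J}) = 0$, $H_1(\mathcal{R}/\mathcal{J}) \cong H_0(\mathcal{J})$, and $H_2(\mathcal{R}/\mathcal{J}) \cong H_1(\mathcal{J})$ from items (i)--(iii) reduce the bounds to estimates on the Krull dimensions of $H_0(\mathcal{J})$ and $H_1(\mathcal{J})$. The main tool is localization at a prime $\mathfrak{p} \subset R$: if $\mathfrak{p}$ does not contain any of the finitely many linear forms $\ell_\sigma$, then every $\mathcal{J}(\beta)_{\mathfrak{p}}$ equals $R_{\mathfrak{p}}$, so $\mathcal{J}_{\mathfrak{p}}$ reduces to the truncated relative simplicial chain complex of $(\Delta, \partial\Delta)$ with coefficients in $R_{\mathfrak{p}}$. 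Since $|\Delta|$ is a $3$-ball this complex has vanishing homology in degrees less than $3$, and we conclude $H_0(\mathcal{J})_{\mathfrak{p}} = H_1(\mathcal{J})_{\mathfrak{p}} = 0$. Hence the supports are contained in the arrangement $\bigcup_\sigma V(\ell_\sigma)$; refining by counting how many of the $\ell_\sigma$ must simultaneously lie in $\mathfrak{p}$ before a genuine cycle can appear at each combinatorial level yields the claimed dimension bounds.

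\emph{Part (2).} By (\ref{iso4}) we have $C^r(\hat\Delta) \cong R \oplus H_2(\mathcal{J})$, so $C^r(\hat\Delta)$ is free if and only if $H_2(\mathcal{J})$ is free. Assume $H_0(\mathcal{J}) = H_1(\mathcal{J}) = 0$; then the complex (\ref{Jcomplex}) collapses to the four-term exact sequence
\[0 \to H_2(\mathcal{J}) \to \bigoplus_{\sigma \in \Delta_2^0} \mathcal{J}(\sigma) \to \bigoplus_{\tau \in \Delta_1^0} \mathcal{J}(\tau) \to \bigoplus_{\gamma \in \Delta_0^0} \mathcal{J}(\gamma) \to 0.\]
Each $\mathcal{J}(\sigma) = \langle \ell_\sigma^{r+1}\rangle$ is principal and hence free of rank one; an iterated application of the depth lemma along this sequence, combined with the fact that ideals of powers of linear forms are Cohen--Macaulay of the expected depth, forces $\mathrm{depth}\, H_2(\mathcal{J}) = \dim R = 4$, whence by the Auslander--Buchsbaum formula $H_2(\mathcal{J})$ has projective dimension $0$ and is free. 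Conversely, if $H_2(\mathcal{J})$ is free then tracing the depth back through the same sequence shows that any nonvanishing $H_0(\mathcal{J})$ or $H_1(\mathcal{J})$ would force $\mathrm{depth}\, H_2(\mathcal{J}) < 4$, a contradiction; both must therefore vanish. In the free case, formula (\ref{withJ}) reduces to the purely local expression $\sum_{i=0}^{3} (-1)^i \sum_{\beta \in \Delta_{3-i}^0} \dim \mathcal{R}/\mathcal{J}(\beta)_k$, each summand of which depends only on the linear forms at the faces incident to $\beta$.

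The principal obstacle is the depth-tracking step in part (2): verifying that each $\mathcal{J}(\tau)$ and $\mathcal{J}(\gamma)$ has sufficient depth requires precise control of ideals generated by powers of linear forms in three variables, exactly the problem the introduction identifies as the main technical difficulty in the trivariate setting, in contrast with the bivariate case where such ideals admit a uniform Hilbert--Burch-type resolution.
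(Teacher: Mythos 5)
First, be aware that the paper offers no proof of this theorem: it is quoted from Schenck \cite{Sck} ("In the following theorem we collect two results from \cite{Sck}"), so your attempt can only be compared with Schenck's original argument. Your overall strategy — localization for part (1), the depth lemma plus Auslander--Buchsbaum for part (2) — is in fact the strategy of that source, but in both parts the step that carries the real content is missing.

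In part (1), localizing at a prime $\mathfrak{p}$ avoiding every $\ell_\sigma$ only shows that $H_0(\J)$ and $H_1(\J)$ are supported on the arrangement $\bigcup_\sigma V(\ell_\sigma)$, i.e.\ have dimension $\leq 3$; the theorem asserts $\leq 0$ and $\leq 1$. The sentence ``refining by counting how many of the $\ell_\sigma$ must simultaneously lie in $\mathfrak{p}$'' is precisely where the proof lives: one must show that for every prime of codimension $\leq 3$ (resp.\ $\leq 2$) the subcomplex of faces $\beta$ with $\J(\beta)_{\mathfrak{p}}\neq R_{\mathfrak{p}}$ — those all of whose incident hyperplanes contain $V(\mathfrak{p})$ — is a union of closed stars, hence contributes no relative homology in the relevant degrees. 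Without that topological/combinatorial argument the claimed bounds are not established. In part (2), the forward implication is essentially sound, though note that the needed depths, $\mathrm{depth}\,\J(\tau)\geq 3$ and $\mathrm{depth}\,\J(\gamma)\geq 2$, follow merely from the fact that the corresponding quotients in two and three variables are Artinian at an interior edge/vertex; no fine control of Hilbert functions of powers of linear forms is required, so your closing remark misidentifies the difficulty (the hard open problem in Section \ref{dimJ0} concerns Hilbert functions, not depth). The converse, however, is broken as written: if $H_0(\J)$ or $H_1(\J)$ is nonzero, the four-term exact sequence you propose to ``trace back through'' does not exist. The correct argument splits the complex (\ref{Jcomplex}) into the short exact sequences involving $W_0$, $K_1$, $W_1$ and the homology modules, and crucially invokes part (1) to conclude $\mathrm{depth}\,H_0(\J)=0$ and $\mathrm{depth}\,H_1(\J)\leq 1$ when these are nonzero; only then does a depth (or local cohomology) chase force $\mathrm{depth}\,H_2(\J)\leq 3<4$, contradicting freeness. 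Your write-up conceals this dependence of part (2) on part (1), which is a genuine gap.
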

\noindent Then, it follows from Theorem \ref{schencktheorem}--(1) that $\dim H_1(\calr/\J)=0$, and hence $\dim H_0(\J)=0$, since these two modules are isomorphic, see (\ref{h1isoh0}) above. Therefore, $H_0(\J)$ vanishes in sufficiently high degree, and so for $k\gg 0$:
\begin{equation}\label{gg}
\dim C^r_k(\Delta)=\sum_{i=0}^3 (-1)^i\hspace{-0.1cm}
\sum_{\beta\in\Delta_{3-i}^0}\dim \mathcal{R}/\mathcal{J}(\beta)_k+ \dim H_{1}(\J)_k.
\end{equation}
In the case that $C^r(\hat\Delta)$ is free, Theorem \ref{schencktheorem}--(2), and formula (\ref{firstdimformula}) imply 
{
\begin{align}
\dim\,C^r_k(\Delta)=\sum_{i=0}^3 (-1)^i
\sum_{\beta\in\Delta_{3-i}^0}\dim \calr/\mathcal{J}(\beta)_k=\dim R_k+\sum_{i=1}^3(-1)^i\dim \hspace{-0.1cm}\bigoplus_{\beta\in\Delta_{3-i}^0}\J(\beta)_k.\label{free}
\end{align}}Let us notice that all the terms in (\ref{free}) only involve
modules generated by powers of linear forms. Before considering bounds
on $\dim C_k^r(\Delta)$ for general $\Delta$ (sections \ref{bounds1}
and  \ref{bounds2}), we will consider the dimension of modules generated by powers of linear forms,
leading so to an approximation for  $\dim C_k^r(\Delta)$ when
$C_k^r(\Delta)$ is free (see Theorem \ref{upperboundfreecase}).

Since by definition $R=\R[x,y,z,w]$, it is easy to see that the space of homogeneous polynomials in $R$ of degree $k$ has dimension
\begin{equation}\label{ringinfourvar}
\dim R_k=\binom{k+3}{3}.
\end{equation}
For  $\sigma\in\Delta_2^0$, the ideal $\J(\sigma)$ is generated by the power $r+1$ of the linear form vanishing on $\hat{\sigma}$, thus
\begin{equation*}
\dim \J(\sigma)_k=\binom{k+3-(r+1)}{3}
\end{equation*}
and hence
\begin{equation}\label{planes}
\dim \bigoplus_{\sigma\in\Delta_2^0} \J(\sigma)_k=f_2^0\,\binom{k+2-r}{3}.
\end{equation}
For $\tau\in\Delta_1^0$, the ideal $\J(\tau)$ by definition is the ideal generated by the powers $r+1$ of the linear forms that define hyperplanes incident to $\hat{\tau}$ in $\hat\Delta$. By construction, each interior edge $\tau$ is at least in the intersection of two (different) hyperplanes corresponding to 2-dimensional faces of $\Delta$. Hence $\J(\tau)$ has at least two generators for any $\tau$.

Let us give a numbering $\tau_1,\dots,\tau_{f_1^0}$ to the elements in $\Delta_1^0$. Note that
for any edge $\tau_i$, we may translate $\tau_i$ to be along one  coordinate axis, and hence may assume that the linear forms in $\J(\tau_i)$ involve only two variables, say $x$ and $y$. Thus,
\[
\calr/\J(\tau_i)\cong \R[z,w]\otimes_\R \R[x,y]/\J(\tau_i)
\]
If $\ell_1$,\dots, $\ell_{s_i}$ are pairwise linearly independent linear forms in $\R[x,y]$ defining $s_i$ hyperplanes incident to $\hat\tau_i$,  and the ideal $\mathcal{J}(\tau_i)$ is generated by $\ell_1^{r+1},\dots ,\ell_{s_i}^{r+1}$,  then the ideal $\J(\tau_i)$  has the following resolution \cite{family}:
{
\begin{equation}\label{resolution}
0\rightarrow {R}(-\Omega_i-1)^{a_i}\oplus {R}(-\Omega)^{b_i}\rightarrow\oplus_{j=1}^{s_i}{R}(-r-1)\rightarrow \J(\tau_i)\rightarrow 0
\end{equation}}where  $\Omega_i$, and the multiplicities $a_i$ and $b_i$ are given by
\begin{equation}\label{formulas}
\Omega_i = \left\lfloor \frac{s_i\,r}{s_i-1}\right\rfloor + 1,\quad a_i=s_i\,(r+1)+(1-s_i)\,\Omega_i,\quad b_i=s_i-1-a_i.
\end{equation}
For each $i$, the number $s_i$ corresponds to the number of different slopes of the hyperplanes incident to $\tau_i$. 

It follows that
{\begin{align}
\dim\bigoplus_{\tau_i\in\Delta_1^0}\J(\tau_i)_k=
\sum_{i=1}^{f_1^0}s_i\binom{k+3-(r+1)}{3}-b_i\binom{k+3-\Omega_i}{3}-a_i\binom{k+3-(\Omega_i+1)}{3}\label{edges}
\end{align}}Here, and throughout the paper we adopt the convention that the binomial coefficient $\binom{u}{m}$ is zero if $u<m$.

Finally, for a vertex $\gamma\in\Delta_0^0$, by definition  $\J(\gamma)$ is the ideal generated by the powers $r+1$ of the linear forms that define hyperplanes incident to $\hat\gamma$. Similarly as before, we give a numbering $\gamma_1,\dots,\gamma_{f_0^0}$ to the vertices in $\Delta_0^0$. Any vertex $\gamma_i$ may be translated to the origin, and hence we may assume that the linear forms generating $\J(\gamma_i)$ involve only three variables, say $x$, $y$ and $z$. In the next section we will discuss some approaches to find the dimension for such ideals. This, together with (\ref{ringinfourvar}-\ref{edges}) will give us an approximation (and in some cases an exact) formula for the dimension of the spline space when $C^r(\hat\Delta)$ is free. We will also use the formulas presented in this section to prove upper and lower bounds in the general settings in Sections \ref{bounds1} and \ref{bounds2}.

\section{On the dimension of the modules $\calr/\J(\gamma)$ }\label{dimJ0}

By translating the vertex $\gamma\in\Delta_0^0$ to the origin, we may assume that the linear forms defining the planes in $\Delta^0$ incident to $\hat\gamma$ involve only the variables $x,y$ and $z$. Thus we have, 
\[
\calr/\J(\gamma)\cong \R[w]\otimes_\R \R[x,y,z]/\J(\gamma).
\]
Let $\mathsf R:=\R[x,y,z]$. From the previous isomorphism, we can study $\dim _\R \calr/\J(\gamma)_k$ by considering the Hilbert function of the $\mathsf R$-module $\mathsf R/\J(\gamma)$. Let us recall that the Hilbert function $H(M)$ of a graded $\mathsf R$-module $M$ is the sequence defined by 
\[H(M,k):=\dim_\R M_k. \]
The problem of computing the Hilbert function associated to ideals of prescribed powers of linear forms, not only in 3, but in $n$ variables, has attracted a great deal of attention in the last years. Its study is linked to classical problems \cite{queen}, and in spite of many partial results (see e.g. \cite{ciliberto} and references therein) it is still open. 

The connection between the Hilbert function of powers of linear forms
and the Hilbert function of a related set of \textit{fat points} in
projective space \cite{iarrobino} has been strongly used to prove
several results in this topic. This connection translates the problem on powers of linear forms into the
study of linear systems in projective $n$-spaces with prescribed
multiplicity at given points. 
The ideal of fat points $I(m; l_{1}, \ldots, l_{r})$ is the ideal of homogeneous
polynomials which vanish at the points $l_{1}, \ldots, l_{t}\in
\pr^{n}$ with multiplicity $m$ (all derivatives of order $m-1$ vanish at the points).
By apolarity, we have 
$$ 
H (\mathsf R/\langle l_1^{r+1},\dots, l_t^{r+1}\rangle, k) = 
H ( I (k-r; l_{1}, \ldots, l_{t}), k).
$$
For instance, in the case of points in $\pr^{1}$, the Hilbert function is given by  the formula of the dimension
obtained from resolution (\ref{resolution}) for ideals generated by power of linear forms in two variables \cite{gs}. 

In this section, we will study the dimension of ideals generated by
powers of linear forms in three variables.
By apolarity, this corresponds to study the Hilbert function of ideals of fat points in $\pr^{2}$. It is clear that if we have $t$ different points in $\pr^2$ then the \textit{expected}
Hilbert function $H ( I (k-r; l_{1}, \ldots, l_{t}), k)$ is given by 
\begin{equation}\label{expdim}
E (t,r+1,3)_{k} := \max \biggl(0,\frac{1}{2} \bigl((k+1)\,(k+2) - t (k-r) (k-r+1\bigr))\biggr).
\end{equation}
Thus, for any linear forms $l_{1}, \ldots, l_{t}$ in three variables, the Hilbert function of $\mathsf R/\langle l_1^{r+1},\dots,
l_t^{r+1}\rangle$  satisfies 
\begin{equation}\label{eq:dimExpected}
H (\mathsf R/\langle l_1^{r+1},\dots, l_t^{r+1}\rangle, k) \geq E (t,r+1,3)_{k}.
\end{equation} 
In \cite{froberg}, Fr\"oberg made a conjecture about the  Hilbert function associated to an ideal generated by a \textit{generic set of forms} in $n$ variables and proved the conjecture for $n=2$. Since then, many authors have studied the conjecture and particularly the special case when the forms generating the ideal are powers of linear equations. The conjecture has been proved in several cases and under certain conditions, see for instance \cites{iarrobino,ciliberto} and the references therein. In particular, for the purpose in this section, it has been proved for ideals generated by generic forms in $n=3$ variables \cite{anick}.

The formula conjectured by Fr\"oberg for the Hilbert function associated to an ideal generated by $t$ forms of degree $r+1$ in a polynomial ring ${R}$ in $n$ variables over $\R$ (or any other field of characteristic zero) will be denoted by $F(t,r+1,n)_i$. This sequence is frequently called Fr\"oberg's sequence and it is defined with the following formula:
\begin{equation}\label{powerseries}F(t,r+1,n)_i=\begin{cases}
F'(t,r+1,n)_i,&\mbox{if \,$F'(t,r+1,n)_u>0$\, for all $u\leq i$,}\\
0&\mbox{otherwise;}
\end{cases}
\end{equation}
where  $F'(t,r+1,n)_i$ is given by
\begin{equation*}
F'(t,r+1,n)_i=\dim_{\R} R_i+\sum_{1\leq j\leq n}(-1)^j\dim_{\R} R_{i-(r+1)j}\binom{t}{j}
\end{equation*}
with the convention that the binomial coefficient $\binom{t}{j}$ is zero if $t<j$. 

\medskip

\noindent We have the following lemma.

\begin{lem}[\cite{iarrobino}]\label{Halgebra}
For any set of different $t$ linear forms $l_1,\dots,l_t$ in $\mathsf R$, and an integer $r\geq 0$, the Hilbert function of the power ideal satisfies:
\begin{equation}\label{inqforlinearforms}
{
\dim_{\R}(\mathsf R/\langle l_1^{r+1},\dots, l_t^{r+1}\rangle)_i\geq F(t,r+1,3)_i\geq E (t,r+1,3)_{i}.}
\end{equation}
Equality holds on the left of (\ref{inqforlinearforms}) when $t\leq 3$, and also when $t=4$ and $l_1,\dots,l_4$ are generic.
\end{lem}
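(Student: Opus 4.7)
My plan is to deduce both inequalities from two classical ingredients: the naive upper bound on $\dim I_{i}$ coming from the tautological surjection $\bigoplus_{j=1}^{t} \mathsf R(-r-1) \twoheadrightarrow I$, and Anick's resolution of Fr\"oberg's conjecture in three variables \cite{anick}, which identifies the generic Hilbert function of a quotient by $t$ forms of degree $r+1$ with $F(t,r+1,3)$.

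First I would establish the universal lower bound $\dim_{\R}(\mathsf R/\langle g_{1},\ldots,g_{t}\rangle)_{i} \geq E(t,r+1,3)_{i}$ for \emph{arbitrary} forms $g_{1},\ldots,g_{t}$ of degree $r+1$: the surjection above yields $\dim I_{i} \leq t\dim \mathsf R_{i-r-1}$, whence $\dim(\mathsf R/I)_{i} \geq \dim \mathsf R_{i} - t\dim \mathsf R_{i-r-1}$, and combining with non-negativity gives the claim. Applying this to a generic choice of forms, for which the Hilbert function equals $F(t,r+1,3)$ by Anick, I obtain the right inequality $F(t,r+1,3)_{i} \geq E(t,r+1,3)_{i}$. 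The left inequality then follows from lower semicontinuity of the Hilbert function of $\mathsf R/\langle g_{1},\ldots,g_{t}\rangle$ regarded as a function of $(g_{1},\ldots,g_{t}) \in (\mathsf R_{r+1})^{t}$: any specialization -- in particular to $(r+1)$-st powers of pairwise distinct linear forms -- has Hilbert function bounded below by the generic one, which is $F(t,r+1,3)$ by Anick.

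For the equality assertions, when $t \leq 3$ (and the linear forms span a $t$-dimensional subspace, which is the natural reading of ``different'' in this range), a linear change of coordinates moves $l_{1},\ldots,l_{t}$ into a subset of $\{x,y,z\}$; then $l_{1}^{r+1},\ldots,l_{t}^{r+1}$ is a regular sequence, the Koszul complex is a minimal free resolution of $\mathsf R/I$, and its Euler characteristic recovers exactly the formula defining $F(t,r+1,3)$. For $t=4$ generic the regularity fails, but the apolarity identification
\[
\dim_{\R}(\mathsf R/\langle l_{1}^{r+1},\ldots,l_{4}^{r+1}\rangle)_{k} = \dim_{\R} I(k-r; l_{1},\ldots,l_{4})_{k}
\]
already noted in the preceding discussion reduces the computation to that of the ideal of four generic fat points in $\pr^{2}$ of multiplicity $k-r$. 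The latter dimension can be evaluated using classical Alexander--Hirschowitz-type results on linear systems with prescribed base points in $\pr^{2}$, and one verifies that the resulting value coincides with $F(4,r+1,3)$, as recorded in \cite{iarrobino}.

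The principal obstacle is the $t=4$ generic case: there one genuinely leaves the complete-intersection regime, so the Koszul argument is unavailable and one must instead invoke the non-trivial dimension count for four generic fat points in $\pr^{2}$ -- equivalently, the Weak Lefschetz Property for the associated three-variable Artinian algebra, a theme taken up in the subsequent sections.
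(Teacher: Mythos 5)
Your argument is correct and follows essentially the same route as the paper: the left inequality comes from Anick's theorem plus semicontinuity (the generic Hilbert function $F(t,r+1,3)$ is minimal among all specializations, in particular powers of linear forms), the right inequality from the tautological surjection giving the naive bound, and the $t\leq 3$ equality from the complete-intersection/Koszul computation --- where your explicit caveat that the forms must be linearly independent, not merely pairwise distinct, is a point the paper glosses over. The only divergence is the $t=4$ generic case, which the paper settles by citing Stanley's theorem on powers of $n+1$ general linear forms in $n$ variables, whereas you route through the dimension count for four generic fat points in $\pr^2$; by apolarity and the Weak Lefschetz connection you yourself mention, these are two phrasings of the same known result, so the difference is one of citation rather than substance.
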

\begin{proof}
Since Fr\"oberg's conjecture is valid for $n=3$ \cite{anick}, then $F(t,r+1,3)_i=H(\mathsf R/\langle f_1,\dots,f_t\rangle,i)$ for generic forms $f_1,\dots, f_t$ of degree $r+1$ in $\mathsf R$. 

Although in general, $\R$-algebras defined by $t$ generic forms of some degree are non isomorphic, they have the same Hilbert function \cite{froberg}, and this Hilbert function is minimal among the Hilbert function of algebras defined by  $t$ forms of the given degree. Thus,  $F(t,r+1,3)_k$ bounds below the Hilbert function $H(\mathsf R/\langle f_1,\dots,f_t\rangle,i)$ where $f_1,\dots,f_t$ are any (non necessarily generic) forms of degree $r+1$ in $\mathsf R$. In particular when $f_1,\dots,f_t$ are powers of linear forms. This implies the inequality on the left of (\ref{inqforlinearforms}). 

The right inequality is clear from definitions \ref{powerseries} and
\ref{expdim} for $F(t,r+1,3)_k$ and $E(t,r+1,3)_k$, respectively 
(since $\dim \mathsf R_{i}= \frac{1}{2}(i+1)\,(i+2)$). 

For  $t\leq 3$, it is the Hilbert function of a complete intersection. The case $t=4$ is a particular case of the result by Stanley \cite{stanley}. 
\end{proof}

\begin{rem} In the settings of Lemma \ref{Halgebra}, when the number of (different) linear forms is $4\leq t\leq 8$, the dimension of the ideal is given by the Fr\"oberg sequence if the points in $\pr^2$ corresponding to the linear forms  are in ``good position" \cites{harbourne1,nagata}. For being in good position, there are some conditions on the divisors on the surface determined by the blow up of the points. For a given set of points in $\pr^2$ those conditions can be verified but there is not a general formula for the dimension that can be given a priori without that verification. 
In his article \cite{harbourne1}, Harbourne also conjectured that the Hilbert function
for ideals generated by powers of any $t\geq 9$ linear forms is given by the Fr\"oberg sequence. 
This  conjecture turned out to be equivalent to other three conjectures, which together gave rise to the  well-celebrated 
Segre-Harbourne-Gimigliano-Hirschowitz Conjecture \cite{ciliberto}.
This conjecture states that $H (\mathsf R/\langle L_1^{r+1},\dots, L_t^{r+1}\rangle, k) = F (t,r+1,3)_{k}$ when $t>8$ and $L_1,\dots,L_t$ are generic linear forms.
It is a special case of the conjecture made by Iarrobino
\cite{iarrobino}, which states that the Fr\"oberg sequence in $\pr^n$
gives the Hilbert function for ideals generated by uniform powers of
generic linear forms except in few cases.  
\end{rem}

\begin{prop}\label{sum-vertices}
Let $\J(\gamma_i)$ be the ideal generated by the powers $r+1$ of the $t_i$ linear forms defining the hyperplanes containing the vertex $\hat\gamma_i$ in $\hat\Delta$, then 
\[\dim \bigoplus_{i=1}^{f_0^0}\mathcal R/\J(\gamma_i)_k\geq \sum_{i=1}^{f_0^0}\biggl(\sum_{j=0}^{k}{F}(t_i,r+1,3)_j\biggr);\]
equality holds if for each vertex $\gamma_i\in\Delta_0^0$, the number $t_{i}$ of generators of $\J(\gamma_i)$ is $t_i= 3$, 
or $t_{i}=4$ and the linear forms are generic.
\end{prop}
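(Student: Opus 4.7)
The statement follows by unwinding the tensor-product description of $\calr/\J(\gamma_i)$ already set up at the start of Section~\ref{dimJ0}, and then applying Lemma~\ref{Halgebra} vertex by vertex. The plan, in detail, is as follows.

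First, for each vertex $\gamma_i \in \Delta_0^0$, translate $\gamma_i$ to the origin so that the linear forms generating $\J(\gamma_i)$ involve only the variables $x,y,z$. Then $\J(\gamma_i)$, viewed as an ideal of $R = \R[x,y,z,w]$, is extended from an ideal $I_i \subset \mathsf R = \R[x,y,z]$, and one has the graded isomorphism
\[
\calr/\J(\gamma_i) \;\cong\; \R[w] \otimes_{\R} \mathsf R/I_i.
\]
Taking the degree-$k$ piece and decomposing according to the $w$-degree yields
\[
\dim_{\R} (\calr/\J(\gamma_i))_k \;=\; \sum_{j=0}^{k} \dim_{\R}(\mathsf R/I_i)_j \;=\; \sum_{j=0}^{k} H(\mathsf R/I_i, j).
\]

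Next, I would apply Lemma~\ref{Halgebra} termwise. Since $I_i$ is generated by the $(r+1)$-st powers of $t_i$ pairwise linearly independent forms in the three variables $x,y,z$, the lemma gives
\[
H(\mathsf R/I_i, j) \;\geq\; F(t_i, r+1, 3)_j
\]
for every $j \geq 0$. Summing over $j = 0, \ldots, k$ and then over $i = 1, \ldots, f_0^0$ produces the desired inequality
\[
\dim \bigoplus_{i=1}^{f_0^0} \calr/\J(\gamma_i)_k \;\geq\; \sum_{i=1}^{f_0^0}\Biggl(\sum_{j=0}^{k} F(t_i,r+1,3)_j\Biggr).
\]

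For the equality claim, Lemma~\ref{Halgebra} states that the inequality $H(\mathsf R/I_i, j) \geq F(t_i, r+1, 3)_j$ is in fact an equality whenever $t_i \leq 3$, or $t_i = 4$ and the four linear forms are generic (the complete intersection case and Stanley's case, respectively). So under the stated hypotheses at every vertex, each summand in the double sum coincides with $\dim (\mathsf R/I_i)_j$, and the bound is saturated.

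There is no serious obstacle here; the work is essentially accounting. The only point that needs a line of justification is the tensor decomposition and the resulting rewriting of $\dim (\calr/\J(\gamma_i))_k$ as a cumulative sum of the Hilbert function of $\mathsf R/I_i$, since after this reduction the proposition is an immediate consequence of Lemma~\ref{Halgebra} applied locally at each interior vertex.
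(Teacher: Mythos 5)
Your proposal is correct and follows essentially the same route as the paper: translate each interior vertex to the origin, use the decomposition $\calr/\J(\gamma_i)\cong\R[w]\otimes_\R\mathsf R/\J(\gamma_i)$ to rewrite $\dim(\calr/\J(\gamma_i))_k$ as the cumulative sum $\sum_{j=0}^k H(\mathsf R/\J(\gamma_i),j)$, and then apply Lemma~\ref{Halgebra} termwise, with the equality cases of that lemma giving the equality claim. No substantive difference from the paper's argument.
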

\begin{proof}
By translating the vertex $\gamma_i$ for $1\leq i\leq f_0^0$ to the origin, we may assume that $\mathcal{J}(\gamma_i)$ is generated by powers of linear forms in three variables. Thus, by Lemma \ref{Halgebra} we have 
\begin{align*}
\dim \mathcal{R}/\mathcal{J}(\gamma_i)_k&=\dim (\R[w]\otimes_\R \R[x,y,z]/\mathcal{J}(\gamma_i))_k\\
&=\sum_{j=0}^k \dim (\R[x,y,z]/\mathcal{J}(\gamma_i))_j\geq \sum_{j=0}^k F(t_i,r+1,3)_j,
\end{align*}
The proposition follows by applying the previous procedure to each vertex $\gamma_i\in\Delta_0^0$, for $i=1,\dots,f_0^0$.
\end{proof}

\noindent We will use this proposition in the next sections to prove lower and upper bounds on $\dim C_k^r(\Delta)$. 

\begin{rem}
{In \cite{sainverse} and  \cite{MRN}, using also the duality between powers of linear forms and ideals of fat points, the authors relate Fr\"oberg's conjecture to the presence or failure of the \textit{weak Lefschetz property} (if multiplication by a general linear form has or not the maximal rank in every degree). A consequence of this connection, is that the results  about the failure of the  weak Lefschetz property for ideals in $n+1$ variables can be interpreted as results about when an ideal generated by powers of general linear forms  in $n$ variables fails to have the Hilbert function predicted by Fr\"oberg.  
The first theorem concerning the dimension of spline spaces using this approach was originally due to  Stanley \cite{stanley}. He showed that when $t=n+1$,  the Hilbert function of an ideal generated by prescribed powers of $t$ general linear forms in $n$ variables is the same as the Hilbert function conjectured by Fr\"oberg.}

\end{rem}

\begin{rem}{ 
A geometric interpretation of Fr\"oberg--Iarrobino conjecture is given in \cite{chandler2005}. 
A linear system is said to be \textit{special} if it does not have the expected dimension. 
In the planar case $\pr^2$ (number of variables $n=3$), Segre--Harbourne--Gimigliano--Hirschowitz's conjecture describes all special linear systems: a linear system is special if and only if it contains a multiple (-1)-curve in its base locus. In spite of many partial results (see e.g. \cite{ciliberto} and references therein), the conjecture is still open. }
\end{rem}
\begin{rem}
For  $\pr^3$, there is an analogous conjecture formulated by Laface and Ugaglia, see \cite{antonio2012}. The authors employ cubic Cremona trasformations to decrease the degree and the multiplicity of the points.
In the recent article \cite{elisa}, the linear components of the base locus of linear systems in $\pr^n$ are studied and the notion of linear-speciality is introduced: a linear system is linearly non-special if its speciality is only caused by its linear base locus. Sufficient conditions for a linear system to be linearly non-special for arbitrary number of points, and necessary conditions for small numbers of points are given.
\end{rem}

\section{An upper bound on $\dim C_k^r(\Delta)$}\label{bounds1}

In this section we will use formulas (\ref{withJ}--\ref{free}) from Section  \ref{construction} and Proposition \ref{sum-vertices} from Section \ref{dimJ0}, to prove an upper bound on $\dim C_k^r(\Delta)$ for a 3-dimensional simplicial complex $\Delta$. 

Let us establish a numbering $\tau_1,\dots,\tau_{f_1^0}$ on the
interior edges $\tau$ in $\Delta_1^0$. For each $i=1,\dots, f_1^0$,
let ${s}_i$ be (as before) the number of different planes supporting
the faces incident to $\tau_i$, and define
$\tilde{s}_i$ as the number of those planes which correspond to triangles whose other two edges are either on $\partial\Delta$, or have index smaller than $i$.  See Fig.~\ref{countingpic} as an example.

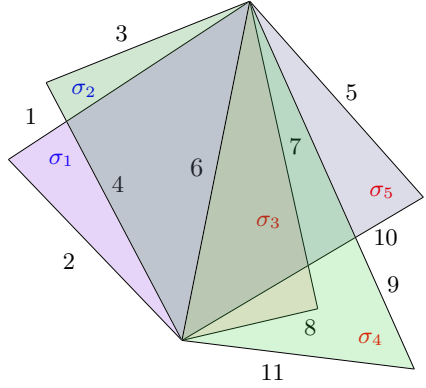
\begin{figure}[!ht]
\scalebox{1}
{\begin{pspicture}(0,-2.465)(6.12875,2.465)
\usefont{T1}{ppl}{m}{n}
\rput(0.3,0.9){\small{$1$}}
\usefont{T1}{ppl}{m}{n}
\rput(0.8,-1.02){\small{$2$}}
\usefont{T1}{ppl}{m}{n}
\rput(1.5,2){\small{$3$}}
\usefont{T1}{ppl}{m}{n}
\rput(1.45,0){\small{$4$}}
\usefont{T1}{ppl}{m}{n}
\rput(4.55,1.225){\small{$5$}}
\usefont{T1}{ppl}{m}{n}
\rput(2.5,0.225){$6$}
\usefont{T1}{ppl}{m}{n}
\rput(3.8,0.5){\small{$7$}}
\usefont{T1}{ppl}{m}{n}
\rput(4,-1.9){\small{$8$}}
\usefont{T1}{ppl}{m}{n}
\rput(5.1,-1.32){\small{$9$}}
\usefont{T1}{ppl}{m}{n}
\rput(5.0,-0.7){\small{$10$}}
\usefont{T1}{ppl}{m}{n}
\rput(3.5,-2.5){\small{$11$}}
\usefont{T1}{ppl}{m}{n}
\rput(0.7,0.325){\large \color{blue}{\small $\sigma_1$}}
\usefont{T1}{ppl}{m}{n}
\rput(1,1.25){\large \color{blue}{\small $\sigma_2$}}
\rput(3.45,-0.5){\large \color{red}{\small $\sigma_3$}}
\usefont{T1}{ppl}{m}{n}
\rput(4.8,-2.05){\large \color{red}{\small $\sigma_4$}}
\usefont{T1}{ppl}{m}{n}
\rput(4.95,-0.1){\large \color{red}{\small$ \sigma_5$}}
\pspolygon[linewidth=0.01cm,linecolor=white,fillstyle=solid,fillcolor=BlueViolet,opacity=0.2](2.3,-2.07)(3.2,2.43)(0.0,0.33)
\psline[linewidth=0.005cm](2.3,-2.07)(0.0,0.33)
\psline[linewidth=0.005cm](0.0,0.33)(3.2,2.43)
\pspolygon
[linewidth=0.01cm,linecolor=white,fillstyle=solid,fillcolor=red,opacity=0.1]
(2.3,-2.07)(3.2,2.43)(4.1,-1.65)
\psline[linewidth=0.005cm](3.2,2.43)(4.1,-1.65)
\psline[linewidth=0.005cm](4.1,-1.65)(2.3,-2.07)
\pspolygon[linewidth=0.01cm,linecolor=white,fillstyle=solid,fillcolor=MidnightBlue,opacity=0.15]
(2.3,-2.07)(3.2,2.43)(5.5,-0.17)
\psline[linewidth=0.005cm](3.2,2.43)(5.5,-0.17)
\psline[linewidth=0.005cm](5.5,-0.17)(2.3,-2.07)
\pspolygon[linewidth=0.01cm,linecolor=white,fillstyle=solid,fillcolor=ForestGreen,opacity=0.2]
(2.3,-2.07)(3.2,2.43)(0.5,1.35)
\psline[linewidth=0.005cm](0.5,1.35)(3.2,2.43)
\psline[linewidth=0.005cm](2.3,-2.07)(0.5,1.35)
\pspolygon[linewidth=0.01cm,linecolor=white,fillstyle=solid,fillcolor=LimeGreen,opacity=0.2]
(2.3,-2.07)(3.2,2.43)(5.38,-2.45)
\psline[linewidth=0.005cm](3.2,2.43)(5.38,-2.45)
\psline[linewidth=0.005cm](5.38,-2.45)(2.3,-2.07)
\psline[linewidth=0.015cm,linecolor=black](3.2,2.43)(2.3,-2.07)
\end{pspicture}}
\caption{For $\tau_6$, ${s_6}=5$ and $\tilde{s_6}=2$.}\label{countingpic}
\end{figure}

We consider the embedding $\hat\Delta$ of $\Delta$ in $\R^4$, and for each edge $\tau_i\in\Delta_1^0$ we define the ideals  $\J(\tau_i)$ and $\widetilde{\J}(\tau_i)$ in $R=\R[x,y,z,w]$, to be the ideal generated by the power $r+1$ of the  $s_i$, and $\tilde s_i$  linear forms of hyperplanes incident to $\hat\tau_i$, respectively.

\begin{thm}\label{upperbound}
The dimension of $C_k^r(\Delta)$ is bounded above by
{
\begin{align*}
\dim C_k^r(\Delta)&\leq \binom{k+3}{3}+f_2^0\binom{k+2-r}{3}\\
&-\sum_{i=1}^{f_1^0}\biggl[\tilde s_i\binom{k+2-r}{3}-\tilde b_i\binom{k+3-\tilde \Omega_i}{3}-\tilde a_i\binom{k+2-\tilde \Omega_i}{3}\biggr]
\end{align*}}with $\tilde{\Omega}_i= \left\lfloor \frac{\tilde s_i\,r}{\tilde s_i-1}\right\rfloor + 1,\; \tilde a_i=\tilde s_i\,(r+1)+(1-\tilde s_i)\,\tilde\Omega_i,\; \tilde b_i=\tilde s_i-1-\tilde a_i$ \;
if $\tilde s_i>1$, and $\tilde a_i=\tilde b_i=\tilde\Omega_i=0$ when $\tilde s_i=1$.
\end{thm}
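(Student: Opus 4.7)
The starting point is the identity (\ref{secondeq}),
\[
\dim C^r_k(\Delta)=\dim R_k+\dim \bigoplus_{\sigma\in\Delta_2^0}\J(\sigma)_k-\dim(W_1)_k,
\]
in which the first two summands are already computed exactly by (\ref{ringinfourvar}) and (\ref{planes}), and contribute the $\binom{k+3}{3}$ and $f_2^0\binom{k+2-r}{3}$ terms in the claimed bound. Since $W_1=\mathrm{Im}(\partial_2)$ enters with a negative sign, producing an \emph{upper} bound on $\dim C_k^r(\Delta)$ reduces to producing a \emph{lower} bound on $\dim(W_1)_k$. This is the only nontrivial step, and the whole role of the edge ordering $\tau_1,\dots,\tau_{f_1^0}$ and of the quantity $\tilde s_i$ is to organise such a lower bound.

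The plan is to filter $W_1$ by the edge ordering and read off one ideal $\widetilde{\J}(\tau_i)$ per level. For $0\le i\le f_1^0$ let $F_i\subset\bigoplus_{\tau\in\Delta_1^0}\J(\tau)$ be the submodule of tuples supported on the edges $\tau_j$ with $j\le i$, and put $W_1^{(i)}:=W_1\cap F_i$, so that $0=W_1^{(0)}\subset W_1^{(1)}\subset\cdots\subset W_1^{(f_1^0)}=W_1$. Projection onto the $\tau_i$-coordinate gives a well-defined graded homomorphism
\[
\pi_i\colon W_1^{(i)}\big/W_1^{(i-1)}\longrightarrow \J(\tau_i),
\]
because every element of $W_1^{(i-1)}\subset F_{i-1}$ has vanishing $\tau_i$-component. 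The crucial claim is that $\mathrm{Im}(\pi_i)\supseteq \widetilde{\J}(\tau_i)$: choose triangles $\sigma_{i,1},\dots,\sigma_{i,\tilde s_i}$ containing $\tau_i$ which realise the $\tilde s_i$ distinct supporting planes counted by $\tilde s_i$; by definition, each $\sigma_{i,j}$ has its two remaining edges either on $\partial\Delta$ or indexed $<i$. For every $f\in R$ and every $j$, the element $\partial_2(f\,\ell_{\sigma_{i,j}}^{r+1})$ thus lies in $W_1^{(i)}$ with $\tau_i$-component equal to $\pm f\,\ell_{\sigma_{i,j}}^{r+1}$, while its other components either vanish (boundary edges) or land in $F_{i-1}$ and are killed by $\pi_i$. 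Varying $f$ and $j$, the image of $\pi_i$ contains the ideal generated by $\ell_{\sigma_{i,1}}^{r+1},\dots,\ell_{\sigma_{i,\tilde s_i}}^{r+1}$, which is exactly $\widetilde{\J}(\tau_i)$. Consequently
\[
\dim(W_1)_k=\sum_{i=1}^{f_1^0}\dim\bigl(W_1^{(i)}/W_1^{(i-1)}\bigr)_k\;\ge\;\sum_{i=1}^{f_1^0}\dim \widetilde{\J}(\tau_i)_k.
\]

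It remains to evaluate $\dim\widetilde{\J}(\tau_i)_k$. After translating $\tau_i$ to a coordinate axis as in the derivation of (\ref{resolution}), $\widetilde{\J}(\tau_i)$ is generated by the $(r+1)$-st powers of $\tilde s_i$ pairwise linearly independent forms in two variables, so the resolution (\ref{resolution}) and the formulas (\ref{formulas}) apply verbatim with $s_i,a_i,b_i,\Omega_i$ replaced by $\tilde s_i,\tilde a_i,\tilde b_i,\tilde\Omega_i$. This yields
\[
\dim\widetilde{\J}(\tau_i)_k=\tilde s_i\binom{k+2-r}{3}-\tilde b_i\binom{k+3-\tilde\Omega_i}{3}-\tilde a_i\binom{k+2-\tilde\Omega_i}{3},
\]
with the stated convention when $\tilde s_i\le 1$ (the principal or zero cases). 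Plugging the lower bound for $\dim(W_1)_k$ back into (\ref{secondeq}) then gives the announced upper bound on $\dim C_k^r(\Delta)$.

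The delicate point is the bookkeeping in the filtration step: one has to verify that (i) the triangles $\sigma_{i,j}$ can indeed be chosen so as to realise $\tilde s_i$ distinct planes while each time pushing the ``unwanted'' boundary contributions into $F_{i-1}$, and (ii) the resulting forms $\ell_{\sigma_{i,1}},\dots,\ell_{\sigma_{i,\tilde s_i}}$ are pairwise linearly independent after translation, so that the resolution (\ref{resolution}) is directly applicable and the dimension formula is the expected one. Once this combinatorial accounting is clear, the remainder is purely a Hilbert-series computation combined with the exact-sequence identity (\ref{secondeq}) already established in Section~\ref{construction}.
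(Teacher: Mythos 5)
Your proof is correct and follows essentially the same route as the paper: both start from the identity (\ref{secondeq}), reduce the problem to lower-bounding $\dim(W_1)_k$ by $\sum_{i}\dim\widetilde{\J}(\tau_i)_k$ via the edge ordering and the quantities $\tilde s_i$, and then evaluate each $\dim\widetilde{\J}(\tau_i)_k$ using the two-variable resolution (\ref{resolution}). Your filtration $W_1^{(i)}=W_1\cap F_i$ with the injective projections $\pi_i$ onto successive quotients is simply an explicit justification of the inequality $\dim\Im(\partial_2)_k\geq\dim\Im(\tilde\partial_2)_k$, which the paper obtains by truncating the boundary map to the lowest-index interior edge of each triangle via the maps $\delta_1$, $\pi_1$, $\varphi_1$.
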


\begin{proof}
Let us consider the map
\[\delta_1:\bigoplus_{\sigma=(\tau,\tau',\tau'')\in \Delta^{0}_{2}}\hspace{-0.2cm}\J(\sigma)[\sigma]\to\bigoplus_{\tau_i\in\Delta_1^0}\bigoplus_{\;\sigma\in N(\tau_i)}\calr[\sigma|\tau_i]\]
where, for each $i$, $N(\tau_i)$ denotes the set of triangles that
contain the edge $\tau_i$ and $([\sigma])_{\sigma\in \Delta_{2}^{0}}$, and 
$([\sigma|\tau_i])_{\sigma \in N(\tau_i)}$ are the canonical bases of
the corresponding free modules.
The map $\delta_1$ is induced by the boundary map $\partial_2$. Thus,  $\delta_1([\sigma])=[\sigma|\tau]-[\sigma|\tau']+[\sigma|\tau'']$ for $\sigma=(\tau,\tau',\tau'')\in\Delta^0_2$, see Fig.~\ref{triangoriented}.
\begin{figure}[!ht]
\hspace{0.5cm}\scalebox{1.5} 
{
\hspace{0.6cm}\begin{pspicture}(0,-0.99421877)(10.42,0.99421877)
\psline[linewidth=0.0001cm,arrowsize=0.1cm 2.0,arrowlength=1.4,arrowinset=0.4]{->}(1.771875,0.8957813)(1.18,0.01)
\psline[linewidth=0.02cm](0.771875,-0.6042187)(1.771875,0.8957813)
\psline[linewidth=0.0005cm](0.771875,-0.6042187)(1.18,0.01)
\rput{-128.69786}(2.9449465,1.3898537){\psarc[linewidth=0.02,arrowsize=0.05291667cm 2.0,arrowlength=1.4,arrowinset=0.4]{->}(1.8061883,-0.012178499){0.24895173}{353.69788}{315.0}}
\psline[linewidth=0.0001cm,arrowsize=0.1cm 2.0,arrowlength=1.4,arrowinset=0.4]{->}(0.771875,-0.6042187)(1.771875,-0.6042187)
\psline[linewidth=0.02cm](0.771875,-0.6042187)(2.771875,-0.6042187)
\psline[linewidth=0.0005cm](1.771875,-0.6042187)(2.771875,-0.6042187)
\psline[linewidth=0.0001cm,arrowsize=0.1cm 2.0,arrowlength=1.4,arrowinset=0.4]{->}(2.771875,-0.6042187)(2.271875,0.1457813)
\psline[linewidth=0.02cm](2.771875,-0.6042187)(1.771875,0.8957813)
\psline[linewidth=0.0005cm](2.271875,0.1457813)(1.771875,0.8957813)
\usefont{T1}{ppl}{m}{n}
\rput(1.1,0.79578125){\small $\sigma$}
\usefont{T1}{ptm}{m}{n}
\rput(4.8,0.09578125){\tiny $\partial_2[\sigma]=[\tau]-[\tau']+[\tau'']$}
\usefont{T1}{ppl}{m}{n}
\rput(1.0,0.09578125){\tiny $\tau'$}
\usefont{T1}{ppl}{m}{n}
\rput(2.6,0.09578125){\tiny $\tau$}
\usefont{T1}{ppl}{m}{n}
\rput(1.7,-0.78){\tiny $\tau''$}
\end{pspicture}
}
\caption{Orientation of a triangle $\sigma\in\Delta_2^0$.}\label{triangoriented}
\end{figure}
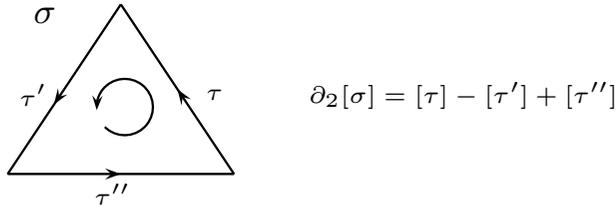

\noindent Let
\[\varphi_1:\bigoplus_{\tau_i\in\Delta_1^0}\bigoplus_{\sigma\in N(\tau_i)}\calr[\sigma|\tau_i]\to\bigoplus_{\tau\in\Delta_1^0}\calr[\tau]\]
with
\begin{equation*}
\varphi_1([\sigma|\tau_i])=
\begin{cases}
\TT [\tau_i]\;\text{ if}\;\tau_i\in\Delta_1^0\\
0\;\text{ if}\; \tau_i\notin \Delta_1^0.
\end{cases}
\end{equation*}
Then, for the restriction map $\partial_2$ to the ideals $\J(\sigma)$  in the complex (\ref{Jcomplex}), we have $\partial_2=\varphi_1\circ\delta_1$. We consider the map
\[\pi_1:\bigoplus_{\tau_i\in\Delta_1^0}\bigoplus_{\sigma\in N(\tau_i)}\calr[\sigma|\tau_i]\to \bigoplus_{\tau_i\in\Delta_1^0}\bigoplus_{\sigma\in N(\tau_i)}\calr[\sigma|\tau_i]\]
defined as follows, according to the numbering established on the
edges. For each triangle $\sigma=(\tau,\tau',\tau'')\in\Delta_2^0$,
either one or two of the edges of $\sigma$ are in $\partial\Delta$, or
$\tau,\tau',\tau''\in\Delta^0_1$. By construction, at least one of the
edges of $\sigma$ is in the interior of $\Delta$, and hence there is
an index assigned to it. Without loss of generality, we may assume
that $\tau\in\Delta_1^0$ is the edge with the smallest index among the
edges of $\sigma$ that are in the interior $\Delta_1^0$. Then
$\pi_{1}$ is defined for the edges corresponding to $\sigma$ by:
\begin{itemize}
 \item $\pi_1([\sigma|\tau])=[\sigma|\tau]$, 
 \ST\item $\pi_1([\sigma|\tau'])=\pi_1([\sigma|\tau''])=0$.
\end{itemize}
Let us denote $\tilde{\partial}_2:=\varphi_1\circ\pi_1\circ\delta_1$.

For $\tau_i\in\Delta_1^0$, define $\tilde N(\tau_i)$ as the set of  triangles $\sigma\in\Delta_2^0$ that contain $\tau_i$ as an edge and whose other two edges do not have index bigger than $i$. 

Thus, $\widetilde\J(\tau_i) = \sum_{\sigma\in\tilde{N}(\tau_i)} \calr \ell^{r+1}_\sigma\subseteq \J(\tau_i)$. By construction, and using the notation we introduced in Section \ref{construction}, we have 
\[\widetilde{W}_{1}:=\Im \tilde{\partial}_2=\bigoplus_{\tau_i\in\Delta_1^0}\widetilde{\J}(\tau_i)[\tau_i].\]
Therefore, $\dim (W_{1})_{k}:=\dim \Im(\partial_2)_k\geq \dim \Im(\tilde{\partial}_2)_k$. 
From formula (\ref{secondeq}) for $\dim C_k^r(\Delta)$ in Section \ref{construction}, it follows
\[\dim C_k^r(\Delta)\leq \dim \calr_k+ \dim \bigoplus_{\sigma\in\Delta_{2}}\J(\sigma)_k-\dim (\widetilde{W}_{1})_{k}.\]
By a change of coordinates such that the edge $\tau_i$ is along one of the coordinate axis, we may assume that the linear forms in $\widetilde\J(\tau_i)$ only involve two variables and then use the resolution (\ref{resolution}) for ideals generated by power of linear forms in two variables to get a formula for $\dim \widetilde W_{1_k}$. Thus, we get
{\small
\begin{align*}
\dim (\widetilde{W}_{1})&_{k}=\dim \bigoplus_{\tau_i\in\Delta_1^0}\widetilde{\J}(\tau_i)=\sum_{i=1}^{f_1^0}\tilde s_i\binom{k+2-r}{3}-\tilde b_i\binom{k+3-\tilde \Omega_i}{3}-\tilde a_i\binom{k+2-\tilde \Omega_i}{3},
\end{align*}}with $\tilde{s}_i=|\tilde{N}(\tau_i)|$, $\tilde\Omega_i$, $\tilde a_i$ and $\tilde b_i$ given by formulas (\ref{formulas}), with $\tilde s_i$ instead of $s_i$. This together with formulas (\ref{ringinfourvar})  and (\ref{planes}) prove the theorem.
\end{proof}

\noindent A different upper bound can be proved for $\dim C_k^r(\Delta)$ when $C_k^r(\hat\Delta)$ is free, i.e.~when $H_1(\J)=H_0(\J)=0$ and  the formula for $\dim C_k^r(\Delta)$ reduces to (\ref{free}).

We keep the numbering on the edges $\tau_i\in\Delta_1^0$, and establish also a numbering $\gamma_1,\dots,\gamma_{f_0^0}$ on the interior vertices of $\Delta$. For each $i=1,\dots,f_0^0$, let $t_i$ be the number of linear forms  defining the hyperplanes containing the vertex $\hat\gamma_i$ in $\hat\Delta$, and $\J(\gamma_i)$ the ideal generated by the power $r+1$ of these linear forms.

Using the results from Section \ref{dimJ0}, Fr\"oberg's sequence gives a formula to bound from above $\dim \J(\gamma)_k$ for each $\gamma\in\Delta_0^0$, and we have the following.

\begin{thm}\label{upperboundfreecase}
If $C^r(\Delta)$ is free then,
dimension of $C_k^r(\Delta)$ is bounded above by
{
\begin{align}
\dim \,C_k^r(\Delta)&\leq \binom{k+3}{3}+f_2^0\binom{k+2-r)}{3}\nonumber\\
&-\sum_{i=1}^{f_1^0}\biggl[s_i\binom{k+2-r)}{3}- b_i\binom{k+3- \Omega_i}{3}- a_i\binom{k+2-\Omega_i}{3}\biggr]\nonumber\\
&+f_0^0\binom{k+3}{3}-\sum_{i=1}^{f_0^0} \biggl(\sum_{j=0}^k F(t_i,r+1,3)_j\biggr),\nonumber
\end{align}}with \,$s_i$ and $t_i$ as defined above, $\Omega_i = \left\lfloor \frac{s_i\,r}{s_i-1}\right\rfloor + 1,\; a_i=s_i\,(r+1)+(1-s_i)\,\Omega_i, \;  b_i=s_i-1-a_i$,
 and $F(t_i,r+1,3)_j$  the $j$-th term of Fr\"oberg's sequence associated to an ideal generated by the power $r+1$ of $t_i$ forms in three variables.
\end{thm}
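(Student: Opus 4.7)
The plan is to specialize formula (\ref{free}) to the free setting and substitute the closed-form dimension formulas already derived in Section~\ref{construction}, using Proposition~\ref{sum-vertices} as the sole source of inequality.

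Since $C^{r}(\hat\Delta)$ is free, Theorem~\ref{schencktheorem}(2) yields $H_{0}(\J)=H_{1}(\J)=0$, so formula (\ref{free}) specializes to an exact identity. Unwinding the Euler-characteristic expression via $\dim\calr/\J(\beta)_{k}=\dim R_{k}-\dim\J(\beta)_{k}$, using that $\J(\iota)=0$ for every $\iota\in\Delta_{3}$, and invoking the $3$-ball identity $f_{3}-f_{2}^{0}+f_{1}^{0}-f_{0}^{0}=1$ (which follows from $\chi(\Delta,\partial\Delta)=-1$), one arrives at
\[
\dim C_{k}^{r}(\Delta)=\dim R_{k}+\sum_{\sigma\in\Delta_{2}^{0}}\dim\J(\sigma)_{k}-\sum_{\tau\in\Delta_{1}^{0}}\dim\J(\tau)_{k}+\sum_{\gamma\in\Delta_{0}^{0}}\dim\J(\gamma)_{k}.
\]

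Next I would substitute the closed-form expressions from Section~\ref{construction}: equation (\ref{ringinfourvar}) for $\dim R_{k}$, equation (\ref{planes}) for the sum over $\sigma\in\Delta_{2}^{0}$, and equation (\ref{edges}) together with (\ref{formulas}) for the sum over $\tau\in\Delta_{1}^{0}$. All three substitutions are exact equalities and reproduce the first two displayed lines of the claimed bound.

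The vertex sum is the only place where inequality enters. Writing $\dim\J(\gamma_{i})_{k}=\binom{k+3}{3}-\dim\calr/\J(\gamma_{i})_{k}$ and applying Proposition~\ref{sum-vertices}, which gives
\[
\sum_{i=1}^{f_0^0}\dim\calr/\J(\gamma_{i})_{k}\;\geq\;\sum_{i=1}^{f_0^0}\sum_{j=0}^{k}F(t_{i},r+1,3)_{j},
\]
yields
\[
\sum_{\gamma\in\Delta_{0}^{0}}\dim\J(\gamma)_{k}\;\leq\;f_{0}^{0}\binom{k+3}{3}-\sum_{i=1}^{f_{0}^{0}}\sum_{j=0}^{k}F(t_{i},r+1,3)_{j}.
\]
Inserting this estimate into the exact formula for $\dim C_{k}^{r}(\Delta)$ produces the announced bound. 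The argument is therefore bookkeeping once the free-case reduction is in hand; the only genuine estimate is the Fr\"oberg lower bound on $\dim_{\R}\mathsf R/\langle l_{1}^{r+1},\dots,l_{t}^{r+1}\rangle$ from Lemma~\ref{Halgebra}, resting on Anick's resolution of Fr\"oberg's conjecture in three variables, and this is precisely what prevents the bound from being sharp for general vertex configurations.
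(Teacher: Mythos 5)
Your proposal is correct and follows essentially the same route as the paper's proof: specialize the Euler characteristic formula (\ref{free}) using freeness (Theorem \ref{schencktheorem}(2)), substitute the exact expressions (\ref{ringinfourvar}), (\ref{planes}) and (\ref{edges}) for the tetrahedron, triangle and edge contributions, and bound the vertex contribution via Proposition \ref{sum-vertices}. Your write-up merely makes explicit the bookkeeping (the identity $\dim\calr/\J(\beta)_k=\dim R_k-\dim\J(\beta)_k$ and the relative Euler characteristic $f_3-f_2^0+f_1^0-f_0^0=1$) that the paper leaves implicit, and correctly isolates the Fr\"oberg estimate as the sole source of inequality.
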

\begin{proof}

Formulas (\ref{ringinfourvar}), (\ref{planes}) and (\ref{edges}) give
the dimension for the modules in (\ref{free}) corresponding to the
tetrahedra, triangles and edges of $\Delta$. For the last term, which
corresponds to the vertices, we apply Proposition \ref{sum-vertices},
and obtain the formula appearing in the last line of the bound in the
statement. It corresponds to applying Fr\"oberg's sequence  (\ref{powerseries}) to ideals  generated by powers of linear forms in three variables in the ring $R=\R[x,y,z,w]$.
\end{proof}

\begin{rem}\label{freecol}
From Proposition \ref{sum-vertices}, when $C^r(\Delta)$ if free, the upper bound on dimension $\dim C_k^r(\Delta)$ in the previous theorem can be improved depending on the number of different planes containing the vertices in $\Delta^0$. For instance, if the number of different planes incident to $\gamma_i$ is $t_i=3$ for every $\gamma_i\in\Delta_0^0$, then Fr\"oberg's sequence gives the exact dimension for the ideal corresponding to the vertices and thus, $\dim C_k^r(\Delta)$ is exactly given by the formula in Theorem 
\ref{upperboundfreecase}, see Examples \ref{oct1} and \ref{ct} below. Also in the case $t_i=3$, this upper bound coincides with  the formula for the lower bound that we  prove in Theorem \ref{lowbound} in the next section.
\end{rem}

\section{A lower bound on $\dim C_k^r(\Delta)$}\label{bounds2}
Let us recall formula (\ref{withJ}) for $\dim C_k^r(\Delta)$ from Section (\ref{construction}),
\[
\dim C^r_k(\Delta)=\dim \calr_k+\sum_{i=1}^3 (-1)^i\hspace{-0.04cm}\dim\hspace{-0.1cm}\bigoplus_{\beta\in\Delta_{3-i}^0}\hspace{-0.1cm}\J(\beta)_k +\dim H_{1}(\J)_k- \dim H_{0}(\J)_k.
\]
If we take zero as a lower bound for $\dim H_1(\J)_k$, then for any $k\geq 0$:
{\begin{equation}\label{forlowbound}
\dim C^r_k(\Delta)\geq\dim \calr_k+\sum_{i=1}^2 (-1)^i\hspace{-0.04cm}\dim\hspace{-0.1cm}\bigoplus_{\beta\in\Delta_{3-i}^0}\hspace{-0.1cm}\J(\beta)_k+\dim (W_0)_k
\end{equation}}where
$W_0:=\Im(\partial_1)$, as defined before. From
(\ref{ringinfourvar}), (\ref{planes}) and (\ref{edges}), we have
explicit expressions for all the terms in (\ref{forlowbound}) except
for $\dim\, (W_0)_k$. By numbering the vertices in $\Delta_0^0$ and by
applying the analogous to the procedure used in last section, we are
going to get an explicit formula that approximates $\dim\, (W_0)_k$ from below. This, by (\ref{forlowbound}), immediately leads to a lower bound on $\dim C_k^r(\Delta)$.

Let us fix the ordering $\gamma_1,\dots,\gamma_{f_0^0}$ on the vertices in $\Delta_0^0$. For each vertex $\gamma_i$, denote by $M(\gamma_i)$  the set of edges $\tau$ in $\Delta_1^0$ that contain the vertex $\gamma_i$.
Let $\tilde M(\gamma_i)$ be the set of interior edges connecting $\gamma_i$ to one of the first $i-1$ vertices in the list, or to a vertex in the boundary.

For each $\gamma_i\in\Delta_0^0$, let $t_i$ be defined as before,
the number of generators of $\J(\gamma_i)$. Define the ideal $\widetilde\J(\gamma_i)$ as
\[
\widetilde\J(\gamma_i)=\langle \ell_\sigma^{r+1}\rangle_{\sigma\ni\tau}  \text{\quad for $\tau\in\tilde M({\gamma_i})$},
\]
and let $\tilde t_i$ be the number of generators of $\widetilde \J(\gamma_i)$. Finally define $\zeta_i=\min(3,\tilde t_i)$.
\begin{thm}\label{lowbound}
The dimension $\dim C_k^r(\Delta)$ is bounded below by
\begin{align}
\dim C^r_k(\Delta)&\geq\binom{k+3}{3}+\biggl[f_2^0\binom{k+2-r}{3}\label{lowerboundformula}\\
&-\sum_{i=1}^{f_1^0}\biggl[s_i\binom{k+2-r}{3}- b_i\binom{k+3- \Omega_i}{3}- a_i\binom{k+2-\Omega_i}{3}\biggr]\nonumber\\
&+f_0^0\binom{k+3}{3}-\sum_{i=1}^{f_0^0}\biggl(\sum_{j=0}^k F(\zeta_i,r+1,3)_j\biggl)\biggr]_+\nonumber
\end{align}
with \,$s_i$ the number of different planes incident to $\tau_i$, $\Omega_i = \left\lfloor \frac{s_i\,r}{s_i-1}\right\rfloor + 1$,  $a_i=s_i\,(r+1)+(1-s_i)\,\Omega_i$,  $b_i=s_i-1-a_i$ (\ref{formulas}), and $\zeta_i=\min(3,\tilde t_i)$.
\end{thm}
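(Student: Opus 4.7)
The plan is to begin from the inequality (\ref{forlowbound}) and supply the only remaining unknown, a lower bound on $\dim(W_0)_k$. The three explicit terms in (\ref{forlowbound}) are already in closed form through (\ref{ringinfourvar}), (\ref{planes}), and (\ref{edges}), and the trivial inclusion $R\hookrightarrow C^r(\hat\Delta)$ (constants are splines) gives the baseline $\dim C_k^r(\Delta)\geq\binom{k+3}{3}$, which is what will allow the extra contribution to be taken with its positive part $[\,\cdot\,]_+$.

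To bound $\dim(W_0)_k$ from below, along the fixed ordering $\gamma_1,\dots,\gamma_{f_0^0}$ of the interior vertices, I introduce the increasing filtration in which $V_m\subseteq W_0$ consists of the elements whose components at $\gamma_{m+1},\dots,\gamma_{f_0^0}$ vanish; one has $0=V_0\subseteq V_1\subseteq\cdots\subseteq V_{f_0^0}=W_0$, and projection onto the $\gamma_m$-component yields an injection $V_m/V_{m-1}\hookrightarrow \J(\gamma_m)$. The crucial point is to show that the image of this injection contains $\tilde\J(\gamma_m)_k$. Given $g\in\tilde\J(\gamma_m)_k=\sum_{\tau\in\tilde M(\gamma_m)}\J(\tau)_k$, write $g=\sum_{\tau}g_\tau$ with $g_\tau\in\J(\tau)_k$, select signs $\epsilon_\tau\in\{\pm1\}$ matching the orientation of $\tau$ at $\gamma_m$, and set $w:=\partial_1\bigl(\sum_{\tau}\epsilon_\tau g_\tau[\tau]\bigr)$. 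Then $w\in W_0$, its $\gamma_m$-component is $g$, and its $\gamma_i$-component for $i>m$ vanishes because every $\tau\in\tilde M(\gamma_m)$ has its other endpoint either on $\partial\Delta$ or at a vertex of index strictly less than $m$. Hence $w\in V_m$ projects to $g$, giving $\dim(V_m/V_{m-1})_k\geq\dim\tilde\J(\gamma_m)_k$ and
\[
\dim(W_0)_k \,\geq\, \sum_{m=1}^{f_0^0}\dim\tilde\J(\gamma_m)_k.
\]

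To turn this into an explicit bound, I would invoke the apolarity/Fr\"oberg machinery of Section~\ref{dimJ0}. Translating $\gamma_m$ to the origin, $\tilde\J(\gamma_m)$ is generated by $(r+1)$-st powers of $\tilde t_m$ linear forms in $\mathsf{R}=\R[x,y,z]$. Selecting $\zeta_m=\min(3,\tilde t_m)$ of them (linearly independent when $\tilde t_m\geq 3$) and letting $\tilde\J'(\gamma_m)\subseteq\tilde\J(\gamma_m)$ be the resulting sub-ideal, Lemma~\ref{Halgebra} applies (the condition $\zeta_m\leq 3$ is exactly the equality case) and yields $\dim(\mathsf{R}/\tilde\J'(\gamma_m))_j=F(\zeta_m,r+1,3)_j$. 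The inclusion of ideals then forces $\dim(\mathsf{R}/\tilde\J(\gamma_m))_j\leq F(\zeta_m,r+1,3)_j$ for every $j$; summing over $0\leq j\leq k$ through the isomorphism $\calr/\tilde\J(\gamma_m)\cong\R[w]\otimes_\R\mathsf{R}/\tilde\J(\gamma_m)$ produces
\[
\dim\tilde\J(\gamma_m)_k \,\geq\, \binom{k+3}{3}-\sum_{j=0}^{k} F(\zeta_m,r+1,3)_j.
\]
Feeding this into the previous lower bound for $\dim(W_0)_k$, together with the closed formulas for the other terms in (\ref{forlowbound}), delivers the bracketed expression in (\ref{lowerboundformula}); the baseline $\binom{k+3}{3}$ from the first paragraph then authorises the $[\,\cdot\,]_+$.

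The main technical obstacle will be the filtration step in the second paragraph. It requires two items to be checked carefully: the signs $\epsilon_\tau$ must be chosen consistently so that the contributions accumulate at $\gamma_m$ (rather than cancel) to give exactly $g$, and the defining property of $\tilde M(\gamma_m)$ has to rule out any nonzero contribution at vertices of index strictly greater than $m$, which is precisely the reason for introducing the ordering in the first place. Once these are in place, everything else reduces to invoking already-developed machinery.
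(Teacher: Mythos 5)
Your proposal is correct and follows essentially the same route as the paper: fix an ordering of the interior vertices, bound $\dim(W_0)_k$ from below by $\sum_{i}\dim\widetilde{\J}(\gamma_i)_k$, then pass to the sub-ideals generated by $\zeta_i=\min(3,\tilde t_i)$ of the forms so that Lemma~\ref{Halgebra} and Proposition~\ref{sum-vertices} give an exact Hilbert function, and feed everything into (\ref{forlowbound}), with the positive part justified by $R\hookrightarrow C^r(\hat\Delta)$. Your filtration of $W_0$ by the support of the components, with the explicit preimage construction showing each graded piece $V_m/V_{m-1}$ contains $\widetilde{\J}(\gamma_m)_k$, is a clean justification of the inequality $\dim(\Im\partial_1)_k\geq\dim(\Im\tilde\partial_1)_k$ that the paper obtains by inserting the projection $\pi_0$ and asserts without further argument.
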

\begin{proof}

Consider the following map
\[\delta_0:\bigoplus_{\tau=(\gamma,\gamma')}\J(\tau)[\tau]\to\bigoplus_{\gamma_i\in\Delta_0^0}\bigoplus_{\tau\in M(\gamma_i)}\calr[\tau|\gamma_i]\]
such that $\delta_0$ is induced by the boundary map $\partial_1$, so that $\delta_0([\tau])=[\tau|\gamma]-[\tau|\gamma']$ for $\tau=(\gamma,\gamma')\in\Delta_1^0$. Let $\varphi_0$ be the map defined as
\[\varphi_0:\bigoplus_{\gamma_i\in\Delta_0^0}\bigoplus_{\tau\in M(\gamma_i)}\calr[\tau|\gamma_i]\to\bigoplus_{\gamma_i\in\Delta_0^0}\calr[\gamma_i]\]
with
\begin{equation*}
\varphi_0([\tau|\gamma_i])=
\begin{cases}
\TT [\gamma_i]\;\text{if}\;\gamma_i\in\Delta_0^0\\
0\;\text{if}\;\gamma_i\notin\Delta_0^0\,.
\end{cases}
\end{equation*}
Then, for the restriction of the map $\partial_1$ to the ideals $\J(\gamma_i)$ in the complex (\ref{Jcomplex}), $\partial_1=\varphi_0\circ\delta_0$. Consider the map
\[\pi_0:\bigoplus_{\gamma_i\in\Delta_0^0}\bigoplus_{\tau\in M(\gamma_i)}\calr[\tau|\gamma_i]\to\bigoplus_{\gamma_i\in\Delta_0^0}\bigoplus_{\tau\in M(\gamma_i)}\calr[\tau|\gamma_i]\]
defined as follows, according to the numbering established on  $\Delta_0^0$. For an edge $\tau=(\gamma,\gamma')\in\Delta_1^0$, at least one of the vertices $\gamma$ or $\gamma'$ is in $\Delta_0^0$. Let us assume $\gamma\in\Delta_0^0$, and either $\gamma'$ is in $\partial\Delta$, or $\gamma'\in\Delta_1^0$ and the index of $\gamma$ is smaller than the index of $\gamma'$. Then $\pi_0$ is defined on the vertices of $\tau$ by: 
\begin{itemize}
\item  $\pi_0([\tau|\gamma])=[\tau|\gamma]$, 
\ST\item $\pi_0([\tau|\gamma'])=0$.
\end{itemize}
We define the map $\tilde\partial_1$ by \[\tilde\partial_1:=\varphi_0\circ\pi_0\circ\delta_0.\] For each $\gamma_i\in\Delta_0^0$,  $\widetilde\J(\gamma_i)=\sum_{\tau\in\tilde M(\gamma_i)}\sum_{\sigma\ni\tau}\calr\ell_\sigma^{r+1}\subseteq\J(\gamma_i)$. Then, by construction
\[\Im(\tilde\partial_1)=\bigoplus_{i=1}^{f_0^0}\widetilde\J(\gamma_i)[\gamma_i],\]
and therefore $\dim(W_0)_k:=\dim(\Im \partial_1)_k\geq\dim(\Im\tilde\partial_1)_k$. Thus, from (\ref{forlowbound}) it follows that
\begin{equation*}
\dim C^r_k(\Delta)\geq\dim \calr_k+\sum_{i=1}^2 (-1)^i\hspace{-0.04cm}\dim\hspace{-0.1cm}\bigoplus_{\beta\in\Delta_{3-i}^0}\hspace{-0.1cm}\J(\beta)_k+\dim \bigoplus_{i=1}^{f_0^0}\widetilde\J(\gamma_i)_k.
\end{equation*}
By construction $\tilde t_i\leq t_i$. Choose $\zeta_i$ linear forms from the generators of $\widetilde\J(\gamma_i)$, where  $\zeta_i=\min(3,\tilde t_i)$. Let $\J_{\zeta_i}(\gamma_i)$ be the ideal generated by the powers $r+1$ of these $\zeta_i$ linear forms. Then,
for each $\gamma_i\in\Delta_0^0$, $\J_{\zeta_i}(\gamma_i)\subseteq \widetilde\J(\gamma_i)$, and hence
\[\dim\bigoplus_{i=1}^{f_0^0}\widetilde\J(\gamma_i)\geq\dim \bigoplus_{i=1}^{f_0^0}\J_{\zeta_i}(\gamma_i)_k.\]
From Proposition \ref{sum-vertices}, we have
\[\sum_{j=0}^k F(\zeta_i,r+1,3)_j=\dim \calr/\J_{\zeta_i}(\gamma_i)_k,\] 
where $ F(\zeta_i,r+1,3)_j$ is defined by the formula \eqref{powerseries}. 
Thus, we obtain the lower bound on the dimension of $C_k^r(\Delta)$ given in the statement of the theorem. Since the dimension of the spline space is at least the number of polynomials in tree variables of degree less than or equal to $k$, then we take the positive part of the additional terms.  
\end{proof}
\noindent The next corollary follows directly from the proof of the previous theorem.
\begin{cor}\label{cor} For a fixed numbering on the interior vertices and $\widetilde\J(\gamma_i)$ defined as above, 
\[\dim H_0(\J)\leq \dim \bigoplus_{\gamma_i\in\Delta_0^0}\J(\gamma_i)-\dim\sum_{\gamma_i\in\Delta_0^0}\widetilde\J(\gamma_i).\]
\end{cor}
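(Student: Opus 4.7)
The plan is to extract the key inequality already established inside the proof of Theorem \ref{lowbound} and combine it with the definition of $H_0(\J)$. Specifically, by definition
\[
H_0(\J)=\bigoplus_{\gamma_i\in\Delta_0^0}\J(\gamma_i)\Big/W_0,\qquad W_0=\Im(\partial_1),
\]
so for each degree $k$ one has the exact equality $\dim H_0(\J)_k=\dim\bigoplus_{\gamma_i}\J(\gamma_i)_k-\dim(W_0)_k$. The content of the corollary is therefore a lower bound on $\dim W_0$.

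For this lower bound I would invoke exactly the construction of the auxiliary map $\tilde\partial_1=\varphi_0\circ\pi_0\circ\delta_0$ used in the proof of Theorem \ref{lowbound}. That construction shows
\[
\Im(\tilde\partial_1)=\bigoplus_{i=1}^{f_0^0}\widetilde\J(\gamma_i)[\gamma_i]\subseteq \Im(\partial_1)=W_0,
\]
because $\tilde\partial_1$ is obtained by composing $\partial_1$ with the projection $\pi_0$ that just throws away some of the canonical basis elements, so its image lands inside the image of $\partial_1$. Hence $\dim (W_0)_k\geq \dim\Im(\tilde\partial_1)_k=\sum_i\dim\widetilde\J(\gamma_i)_k$.

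Finally, I would identify the right-hand side $\dim\sum_{\gamma_i}\widetilde\J(\gamma_i)$ appearing in the statement with $\sum_i\dim\widetilde\J(\gamma_i)$ by observing that the $\widetilde\J(\gamma_i)$ sit in distinct summands of $\bigoplus_i\J(\gamma_i)$, so their sum inside that ambient direct sum is itself a direct sum. Substituting the inequality $\dim W_0\geq\dim\sum_i\widetilde\J(\gamma_i)$ into the formula for $\dim H_0(\J)$ gives the claimed bound. The only subtle point, and the one I would double check, is the interpretation of the symbol $\sum_{\gamma_i}\widetilde\J(\gamma_i)$: as above it should be read as the direct sum of the component ideals, matching $\Im(\tilde\partial_1)$; no genuine computation is required beyond this bookkeeping, since the main work has already been carried out inside the proof of Theorem \ref{lowbound}.
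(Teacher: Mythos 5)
Your overall plan is the right one and matches the paper's, which states that the corollary ``follows directly from the proof of the previous theorem'': write $\dim H_0(\J)_k=\dim\bigoplus_i\J(\gamma_i)_k-\dim(W_0)_k$ and bound $\dim(W_0)_k$ below by $\dim\Im(\tilde\partial_1)_k=\sum_i\dim\widetilde\J(\gamma_i)_k$; your reading of $\sum_i\widetilde\J(\gamma_i)$ as a direct sum inside $\bigoplus_i\J(\gamma_i)$ is also the intended one. The gap is in your justification of the middle step. The containment $\Im(\tilde\partial_1)\subseteq\Im(\partial_1)$ is false in general, and $\tilde\partial_1$ is not ``$\partial_1$ composed with a projection'': $\pi_0$ is inserted between $\delta_0$ and $\varphi_0$, acting on the intermediate module $\bigoplus_{\gamma_i}\bigoplus_{\tau\in M(\gamma_i)}\calr[\tau|\gamma_i]$, which is neither the source nor the target of $\partial_1$. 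Concretely, for an interior edge $\tau=(\gamma,\gamma')$ joining two interior vertices one has $\partial_1(f[\tau])=f[\gamma]-f[\gamma']$ while $\tilde\partial_1(f[\tau])=f[\gamma]$, and a single component $f[\gamma]$ lies in the span of the differences $g[\gamma]-g[\gamma']$ only for $f=0$; so neither image contains the other. Nor does $\tilde\partial_1$ factor through $\partial_1$: already for a triangle of interior edges one checks $\ker\partial_1\not\subseteq\ker\tilde\partial_1$, so there is no map carrying $\Im(\partial_1)$ onto $\Im(\tilde\partial_1)$.

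What is true --- and what the construction in the proof of Theorem \ref{lowbound} is set up to exploit --- is only the inequality of dimensions $\dim(W_0)_k\geq\dim\Im(\tilde\partial_1)_k$, and it requires a triangularity (leading--term) argument rather than a containment. Order the components of the target $\bigoplus_i\calr[\gamma_i]$ by the chosen numbering of the vertices, so that for every interior edge the endpoint selected by $\pi_0$ precedes the discarded one (boundary endpoints do not appear in the target at all). Then for $f\in\J(\tau)$ the leading component of $\partial_1(f[\tau])$ with respect to this order is exactly $\tilde\partial_1(f[\tau])$, hence the leading--coefficient module of $W_0$ at the position $[\gamma_i]$ contains $\widetilde\J(\gamma_i)$, and summing $\dim(W_0)_k=\sum_i\dim\bigl(\text{leading coefficients at }[\gamma_i]\bigr)_k\geq\sum_i\dim\widetilde\J(\gamma_i)_k$ gives the required bound. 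With that substitution your argument goes through and reproduces the paper's proof; as written, however, the step you actually assert would fail.
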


\begin{rem}{
Following the proof of Theorem \ref{lowbound}, better lower bounds can be proved if the linear forms defining the ideals $\J(\gamma_i)$ are generic, see Proposition \ref{sum-vertices} and the remarks at the end of Section \ref{dimJ0}. By knowing the Hilbert function of ideals generated by powers of $\tilde{t}_i\geq 4$ linear forms in three variables one might avoid the step of taking $\zeta_i=\min (3,\tilde{t}_i)$ and improve the lower bound.}
\end{rem}

\begin{rem}{
In the case of $C^r(\hat\Delta)$ being free, we can use the upper
bounds either from Theorem \ref{upperboundfreecase} or Theorem \ref{upperbound}, together with the lower bound in Theorem \ref{lowbound}. Depending on the value of $k$ and $r$, they provide a closer approximation to the exact dimension, see the examples in Section \ref{examples}.}
\end{rem}

\section{Examples}\label{examples}
For the central configurations that we will consider in this section, it is easy to see that $H_0(\J)$ is always zero; it can be deduced directly, or it can be proved using the construction in the last section, see Corollary \ref{cor}. The  values for $H_1(\mathcal J)$ were computed using the Macaulay2 software \cite{macaulay}.

\begin{ex}\label{oct1}{Let $\Delta$ be a octahedron subdivided into eight tetrahedra by placing a symmetric central vertex, see Fig.~\ref{octahedron1}.}
\end{ex}

\begin{figure}[!ht]
\scalebox{1.5} 
{\begin{pspicture}(0,-2.01)(4.01,2.01)
\pspolygon[linewidth=0.01cm,linecolor=white,fillstyle=solid,fillcolor=red,opacity=0.17]
(1.8,-0.1)(1.9,-2.0)(1.1,0.2)
\pspolygon[linewidth=0.01cm,linecolor=white,fillstyle=solid,fillcolor=red,opacity=0.17]
(2.45,-0.4)(1.9,-2.0)(1.8,-0.1)
\pspolygon[linewidth=0.01cm,linecolor=white,fillstyle=solid,fillcolor=yellow,opacity=0.2]
(2.45,-0.4)(1.9,2.0)(1.8,-0.1)
\pspolygon[linewidth=0.01cm,linecolor=white,fillstyle=solid,fillcolor=yellow,opacity=0.2]
(1.8,-0.1)(1.9,2.0)(1.1,0.2)
\pspolygon[linewidth=0.01cm,linecolor=white,fillstyle=solid,fillcolor=yellow,opacity=0.10]
(1.8,-0.1)(2.45,-0.4)(4.0,0.0)
\pspolygon[linewidth=0.01cm,linecolor=white,fillstyle=solid,fillcolor=yellow,opacity=0.1]
(1.8,-0.1)(1.1,0.2)(4.0,0.0)
\pspolygon[linewidth=0.01cm,linecolor=white,fillstyle=solid,fillcolor=yellow,opacity=0.18]
(0.0,0.0)(1.1,0.2)(1.8,-0.1)
\pspolygon[linewidth=0.01cm,linecolor=white,fillstyle=solid,fillcolor=yellow,opacity=0.18]
(0.0,0.0)(2.45,-0.4)(1.8,-0.1)
\pspolygon[linewidth=0.01cm,linecolor=white,fillstyle=solid,fillcolor=MidnightBlue,opacity=0.18]
(1.8,-0.1)(1.9,2.0)(0.0,0.0)
\pspolygon[linewidth=0.01cm,linecolor=white,fillstyle=solid,fillcolor=MidnightBlue,opacity=0.18]
(1.9,-2.0)(1.8,-0.1)(4.0,0.0)
\pspolygon[linewidth=0.01cm,linecolor=white,fillstyle=solid,fillcolor=Green,opacity=0.11]
(1.8,-0.1)(1.9,2.0)(4.0,0.0)
\pspolygon[linewidth=0.01cm,linecolor=white,fillstyle=solid,fillcolor=Green,opacity=0.11]
(0.0,0.0)(1.9,-2.0)(1.8,-0.1)
\pspolygon[linewidth=0.01cm,linecolor=white,fillstyle=solid,fillcolor=Green,opacity=0.11]
(0.0,0.0)(1.9,-2.0)(2.45,-0.4)
\pspolygon[linewidth=0.01cm,linecolor=white,fillstyle=solid,fillcolor=Green,opacity=0.11]
(2.45,-0.4)(1.9,-2.0)(4.0,0.0)
\pspolygon[linewidth=0.01cm,linecolor=white,fillstyle=solid,fillcolor=red,opacity=0.12]
(1.1,0.2)(1.9,2.0)(0.0,0.0)
\pspolygon[linewidth=0.01cm,linecolor=white,fillstyle=solid,fillcolor=red,opacity=0.10]
(1.1,0.2)(1.9,2.0)(4.0,0.0)
\pspolygon[linewidth=0.01cm,linecolor=white,fillstyle=solid,fillcolor=red,opacity=0.08]
(2.45,-0.4)(1.9,2.0)(4.0,0.0)
\psline[linewidth=0.01cm,linecolor=black](0.0,0.0)(1.9,-2.0)
\psline[linewidth=0.015cm,linecolor=black](1.9,-2.0)(2.45,-0.4)
\psline[linewidth=0.015cm,linecolor=black](2.45,-0.4)(0.0,0.0)
\psline[linewidth=0.01cm,linecolor=black](1.9,-2.0)(4.0,0.0)
\psline[linewidth=0.015cm,linecolor=black](2.45,-0.4)(4.0,0.0)
\psline[linewidth=0.01cm,linecolor=black](1.9,2.0)(4.0,0.0)
\psline[linewidth=0.015cm,linecolor=black](2.45,-0.4)(1.9,2.0)
\psline[linewidth=0.01cm,linecolor=black](1.9,2.0)(0.0,0.0)
\usefont{T1}{pcr}{m}{n}
\rput(1.85,-0.95){\tiny $1$}
\usefont{T1}{pcr}{m}{n}
\rput(2.15,-0.2){\tiny $2$}
\rput(2.8,-0.1){\tiny $3$}
\rput(1.35,0.1){\tiny $4$}
\rput(0.9,-0.05){\tiny $5$}
\rput(1.85,0.75){\tiny $6$}
\end{pspicture}
}
\caption{Regular octahedron.}\label{octahedron1}
\end{figure}
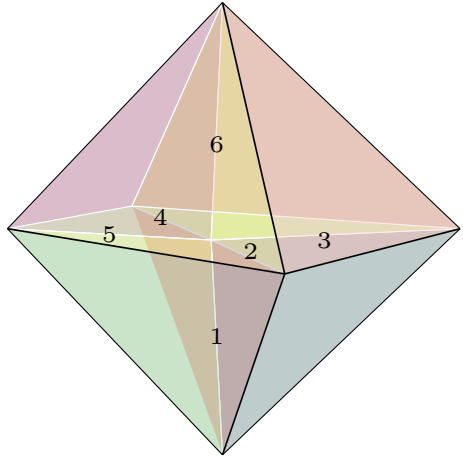
Computations show that $H_1(\J)$ is zero for all non-generic constructions  \cite{Sck}. Since in this partition, there are exactly three different planes through the central vertex, then the Fr\"oberg sequence gives us an explicit formula for the dimension of the ideal associated to the (unique) interior vertex, and the dimension  $\dim C_k^r(\Delta)$ is given by the upper bound formula in Theorem (\ref{upperboundfreecase}), see Remark \ref{freecol}. The formula is the following
\begin{align*}
\dim C_k^r(\Delta)=& \binom{k+3}{3}+12\binom{k+3-(r+1)}{3}\\
&-\sum_{i=1}^{6}\biggl[2\binom{k+3-(r+1)}{3}-\binom{k+3-(2r+2)}{3}\biggr]\\
&+\binom{k+3}{3}-\sum_{j=0}^k F(3,r+1,3)_j.
\end{align*}
From the definition of Fr\"oberg's sequence (\ref{powerseries}), 
\begin{align*}
F(3,r+1,3)_j=
\binom{j+2}{2}-3\binom{j+1-r}{2}+3\binom{j-2r}{2}-\binom{j-3r-1}{2}.
\end{align*}
It is easy to check that  $F(3,r+1,3)_j> 0$ for every $0\leq j< 3r+3$, and equal to zero otherwise. Hence, we can write
\begin{equation}\label{3v}
\sum_{j=0}^k F(3,r+1,3)_j=
\binom{k+3}{3}-3\binom{k+2-r}{3}+3\binom{k-2r+1}{3}-\binom{k-3r}{3}
\end{equation}
and thus, the formula for the dimension of the spline space on the regular octahedron in Fig.~\ref{octahedron1} reduces to the expression
{\begin{align*}\label{dimoct1}
\dim C_k^r(\Delta)=& \binom{k+3}{3}+3\binom{k+2-r}{3}+
3\binom{k+1-2r}{3}+\binom{k-3r}{3}.
\end{align*}}

\begin{ex}\label{ej2}{
Let us consider the generic case of an octahedron subdivided into tetrahedra, where no set of four vertices of the octahedron is coplanar, Fig.~\ref{octahedron2}. As we mentioned above, we have $H_0(\J)$ equal to zero. But in contrast to the regular case, $H_1(\J)$ is equal to zero when $r=1$ but not for any other value of $r$ \cite{Sck}.}
\end{ex}
\begin{figure}[!ht]
\scalebox{1.5} 
{\begin{pspicture}(0,-1.9391599)(3.6900234,1.9391599)
\pspolygon[linewidth=0.01,linecolor=white,fillstyle=solid,fillcolor=red,opacity=0.17](1.6,0.5)(2.6,1.9316598)(3.8,0.21829198)
\pspolygon[linewidth=0.01,linecolor=white,fillstyle=solid,fillcolor=red,opacity=0.17](1.6,0.5)(2.6,1.9316598)(0.1,0.3)
\pspolygon[linewidth=0.01,linecolor=white,fillstyle=solid,fillcolor=yellow,opacity=0.2](2.1,0.1)(2.7,-1.8)(1.6,0.5)
\pspolygon[linewidth=0.01,linecolor=white,fillstyle=solid,fillcolor=yellow,opacity=0.2](2.5,-0.6)(2.7,-1.8)(2.1,0.1)
\pspolygon[linewidth=0.01,linecolor=white,fillstyle=solid,fillcolor=yellow,opacity=0.10](2.5,-0.6)(2.6,1.9316598)(2.1,0.1)
\pspolygon[linewidth=0.01,linecolor=white,fillstyle=solid,fillcolor=yellow,opacity=0.1](2.1,0.1)(2.6,1.9316598)(1.6,0.5)
\pspolygon[linewidth=0.01,linecolor=white,fillstyle=solid,fillcolor=yellow,opacity=0.18](2.1,0.1)(2.5,-0.6)(3.8,0.21829198)
\pspolygon[linewidth=0.01,linecolor=white,fillstyle=solid,fillcolor=yellow,opacity=0.18](2.1,0.1)(1.6,0.5)(3.8,0.21829198)
\pspolygon[linewidth=0.01,linecolor=white,fillstyle=solid,fillcolor=MidnightBlue,opacity=0.18](0.1,0.3)(1.6,0.5)(2.1,0.1)
\pspolygon[linewidth=0.01,linecolor=white,fillstyle=solid,fillcolor=MidnightBlue,opacity=0.18](0.1,0.3)(2.5,-0.6)(2.1,0.1)
\pspolygon[linewidth=0.01,linecolor=white,fillstyle=solid,fillcolor=Green,opacity=0.11](2.1,0.1)(2.6,1.9316598)(0.1,0.3)
\pspolygon[linewidth=0.01,linecolor=white,fillstyle=solid,fillcolor=Green,opacity=0.11](2.7,-1.8)(2.1,0.1)(3.8,0.21829198)
\pspolygon[linewidth=0.01,linecolor=white,fillstyle=solid,fillcolor=Green,opacity=0.11](2.1,0.1)(2.6,1.9316598)(3.8,0.21829198)
\pspolygon[linewidth=0.01,linecolor=white,fillstyle=solid,fillcolor=Green,opacity=0.11](0.1,0.3)(2.7,-1.8)(2.1,0.1)
\pspolygon[linewidth=0.01,linecolor=white,fillstyle=solid,fillcolor=red,opacity=0.12](1.6,0.5)(2.7,-1.8)(3.8,0.21829198)
\pspolygon[linewidth=0.01,linecolor=white,fillstyle=solid,fillcolor=red,opacity=0.10](1.6,0.5)(2.7,-1.8)(0.1,0.3)
\pspolygon[linewidth=0.01,linecolor=white,fillstyle=solid,fillcolor=red,opacity=0.08](2.5,-0.6)(2.6,1.9316598)(3.8,0.21829198)
\psline[linewidth=0.01cm](0.1,0.3)(2.7,-1.8)
\psline[linewidth=0.015cm](2.7,-1.8)(2.5,-0.6)
\psline[linewidth=0.015cm](2.5,-0.6)(0.1,0.3)
\psline[linewidth=0.01cm](2.7,-1.8)(3.8,0.21829198)
\psline[linewidth=0.015cm](2.5,-0.6)(3.8,0.21829198)
\psline[linewidth=0.01cm](2.6,1.9316598)(3.8,0.21829198)
\psline[linewidth=0.015cm](2.5,-0.6)(2.6,1.9316598)
\psline[linewidth=0.01cm](2.6,1.9316598)(0.1,0.3)
\usefont{T1}{pcr}{m}{n}
\rput(2.45,-1.0){\tiny $1$}
\usefont{T1}{pcr}{m}{n}
\rput(2.35,-0.3){\tiny $2$}
\rput(2.8,0.15){\tiny $3$}
\rput(1.85,0.3){\tiny $4$}
\rput(1.0,0.2){\tiny $5$}
\rput(2.3,0.8){\tiny $6$}
\end{pspicture}
}
\caption{Generic octahedron.}\label{octahedron2}
\end{figure}
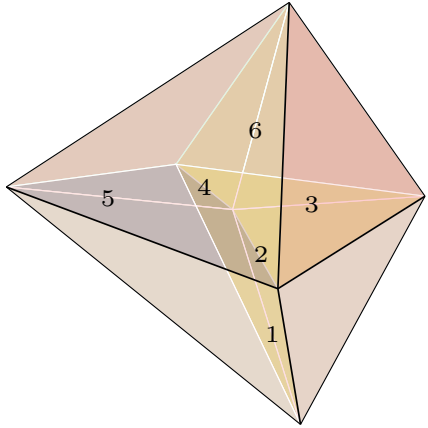

For this partition $\Delta$, we have $t=12$ different planes corresponding to the triangles meeting at the central vertex. Then $\zeta=\min (3,t)=3$, and using the formula (\ref{3v}) from the previous example for the sum of the $F(3,r+1,3)_j$ for $r=1$, Theorem (\ref{lowbound}) gives us the following lower bound
\begin{align*}
\ST  \dim C_k^1(\Delta)&\geq\binom{k+3}{3}+\biggl[12\binom{k+1}{3}\hspace{-0.1cm}-\hspace{-0.1cm}6\biggl[3\binom{k+1}{3}-2\binom{k}{3}\biggr]+\binom{k+3}{3}\hspace{-0.1cm}-\sum_{j=0}^k F(3,2,3)_j\biggr]_+\ST\\
\ST  &=\binom{k+3}{3}+\biggl[-3\binom{k+1}{3}+12\binom{k}{3}-3\binom{k-1}{3}+\binom{k-3}{3}\biggr]_+.
\end{align*}
In order to find an upper bound, we apply Theorem \ref{upperbound} for some ordering on the interior edges of the partition. For instance,
with the numbering on the edges as in Fig.~\ref{octahedron2}, we have $\tilde s_1=0$, $\tilde s_2=1$, $\tilde s_3=\tilde s_4=2$, $\tilde s_5=3$, and $\tilde s_6=4$, and so for any degree $k$:

\[
\dim C_k^1(\Delta)\leq \binom{k+3}{3}+\binom{k+1}{3}+4\binom{k}{3}+2\binom{k-1}{3}.
\]
Also, when $r=1$, since $H_1(\J)=0$, we can find an upper bound by applying Theorem \ref{upperboundfreecase}. This upper bound is given by the formula
\[
\dim C_k^1(\Delta)\leq  2\binom{k+3}{3}+6\binom{k-1}{3}
-\sum_{j=0}^k F(12,2,3)_j.
\]
From (\ref{powerseries}), we have that $F(12,2,3)_j>0$ only for $j=0,1$, and it is zero otherwise. Then
\[
\dim C_k^1(\Delta)\leq\begin{cases}
\ST \binom{k+3}{3}&\mbox{when $k=0,1$}\\
\TT 2\binom{k+3}{3}+6\binom{k-1}{3}-4&\mbox{for $k\geq 2$}.
\end{cases}
\]
\begin{rem}
In \cite{jimmy}, by using inverse systems of fat points, the author studies the dimension of $C^2$ splines on tetrahedral complexes in $\R^3$ sharing a single interior vertex. By a classification of fat point ideals, the question in this case leads to analyze  ideals associated  to (only) $\leq 10$ hyperplanes passing through a common vertex.  
\end{rem}

\begin{ex}\label{ct}{Let $\Delta$ be the Clough--Tocher split consisting of a tetrahedron which has been split about an interior point into four subtetrahedra, Fig.~\ref{CT-split}.}
\end{ex}
\begin{figure}[!ht]
\scalebox{1.5} 
{\begin{pspicture}(0,-1.3)(3.6900234,1.9391599)
\pspolygon[linewidth=0.02,linecolor=white,fillstyle=solid,fillcolor=yellow,opacity=0.15]
(2.2,0.2)(0.3,0.8)(2.9,-1.2)
\pspolygon[linewidth=0.02,linecolor=white,fillstyle=solid,fillcolor=Orange,opacity=0.15]
(2.2,0.2)(3.7,0.5)(2.9,-1.2)
\pspolygon[linewidth=0.02,linecolor=white,fillstyle=solid,fillcolor=yellow,opacity=0.2]
(2.2,0.2)(3.7,0.5)(0.3,0.8)
\pspolygon[linewidth=0.01,linecolor=white,fillstyle=solid,fillcolor=Green,opacity=0.12](0.3,0.8)(2.2,0.2)(2.2,1.8)
\pspolygon[linewidth=0.01,linecolor=white,fillstyle=solid,fillcolor=Green,opacity=0.15](3.7,0.5)(2.2,0.2)(2.2,1.8)
\pspolygon[linewidth=0.01,linecolor=white,fillstyle=solid,fillcolor=Green,opacity=0.12](2.2,0.2)(2.9,-1.2)(2.2,1.8)
\pspolygon[linewidth=0.01,linecolor=white,fillstyle=solid,fillcolor=Blue,opacity=0.06](3.7,0.5)(2.9,-1.2)(0.3,0.8)
\pspolygon[linewidth=0.01,linecolor=white,fillstyle=solid,fillcolor=Blue,opacity=0.06](3.7,0.5)(2.2,1.8)(0.3,0.8)
\pspolygon[linewidth=0.01,linecolor=white,fillstyle=solid,fillcolor=Blue,opacity=0.06](3.7,0.5)(2.2,1.8)(2.9,-1.2)
\pspolygon[linewidth=0.01,linecolor=white,fillstyle=solid,fillcolor=Blue,opacity=0.06](2.9,-1.2)(2.2,1.8)(0.3,0.8)
\psdots[dotsize=0.05](2.2,0.2)
\psline[linewidth=0.01cm,linestyle=dashed,dash=0.05cm 0.04cm](0.3,0.8)(3.7,0.5)
\psline[linewidth=0.015cm](2.2,1.8)(3.7,0.5)
\psline[linewidth=0.015cm](2.9,-1.2)(2.2,1.8)
\psline[linewidth=0.015cm](2.2,1.8)(0.3,0.8)
\psline[linewidth=0.015cm](0.3,0.8)(2.9,-1.2)
\psline[linewidth=0.015cm](3.7,0.5)(2.9,-1.2)
\end{pspicture}
\usefont{T1}{pcr}{m}{n}
\rput(-2.3,1.7){\tiny $4$}
\rput(-1.65,2.4){\tiny $2$}
\rput(-0.9,1.7){\tiny $1$}
\rput(-1.3,0.9){\tiny $3$}
}
\caption{Clough--Tocher split.}\label{CT-split}
\end{figure}
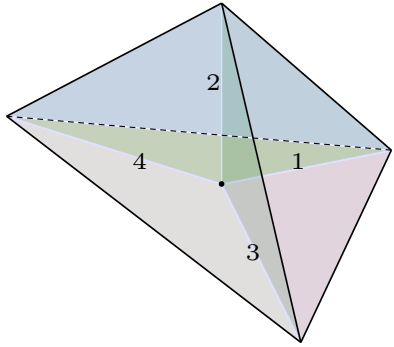
\noindent We consider  $r=1$ and $r=2$. In these two cases the homology module $H_1(\mathcal{J})$ is zero. 

\begin{enumerate}[(i)]
\item For $r=1$, as in the previous example, we have
{\small
\begin{align*}
\sum_{j=0}^k F(3,2,3)_j&=\binom{k+3}{3}-3\binom{k+1}{3}+3\binom{k-1}{3}-\binom{k-3}{3}
\end{align*}}Then, the lower bound on the spline space proved in Theorem \ref{lowbound} is given by
{\small
\begin{align*}
&\dim C^1_k(\Delta)\geq\binom{k+3}{3}+\biggl[-3\binom{k+1}{3}
+8\binom{k}{3}-3\binom{k-1}{3}+\binom{k-3}{3}\biggr]_+.
\end{align*}
}The upper bound we obtained in this example, by applying Theorem \ref{upperbound} with the numbering of the edges as in Fig.~\ref{CT-split} is the following:
\begin{align*}
&\dim C^1_k(\varDelta)\leq\binom{k+3}{3}+\binom{k-1}{3}+2\binom{k}{3}.
\end{align*}
Since for $r=1$ the homology module $H_1(\mathcal{J})=0$, applying Theorem \ref{upperboundfreecase} we find the upper bound
\begin{equation}\label{up}
\dim C_k^1(\Delta)\leq\begin{cases}
\ST 1&\mbox{for $k=0$}\\
\TT 2\binom{k+3}{3}-6\binom{k+1}{3}+8\binom{k}{3}-4&\mbox{for $k\geq 1$}.
\end{cases}
\end{equation}

\medskip
\noindent The formula (\ref{up}) coincides with the generic dimension formula computed in \cite{ASW} for this partition $\Delta$. 
Although the formula in \cite{ASW} holds only for $k\geq 8$ (and $r=1$), it in turn coincides with the lower bound formula proved in \cite{boundsTriva} in every degree $k\geq 0$. In fact, in general, the dimension of the spline space of any nongeneric decomposition is always greater than or equal to the generic dimension, it is the smallest dimension encountered as one moves the vertices of the complex. 
Thus, since the lower bound formula proved in \cite{boundsTriva}
coincides with  the upper bound we proved above (\ref{up}), we deduce the
following result:

\noindent
{\em  the exact dimension of the $C^1$ spline space over the Clough--Tocher split is 

$
\dim C_k^1(\Delta)=\begin{cases}
\ST 1&\mbox{for $k=0$}\\
\TT 2\binom{k+3}{3}-6\binom{k+1}{3}+8\binom{k}{3}-4&\mbox{for $k\geq 1$}.
\end{cases}
$
}
\medskip

\begin{rem}
In 
\cite{tatjana}, the authors consider the general case of this example. They study $C^1$ splines on the $n$-dimensional Clough-Tocher split, i.e., on a simplex in $\R^n$ partitioned around an interior point into $n+1$ subsimplices. A formula for the dimension is proved by combining results about the module structure of the spline space and Bernstein-B\'ezier methods.
\end{rem}

\item Let us consider the case $r=2$.

\noindent A lower bound is given by the formula 
\[\dim C_k^2(\Delta)\geq \binom{k+3}{3}+\biggl[ -3\binom{k}{3}+4\binom{k-1}{3}+4\binom{k-2}{3}-3\binom{k-3}{3}+\binom{k-6}{3}\biggr]_+\]
Using that $H_1(\mathcal J)=0$, and Theorem \ref{upperboundfreecase}, the following is an upper bound for $k\geq 3$:
\[\dim C_k^2(\Delta)\leq 2\binom{k+3}{3}-6\binom{k}{3}+4\binom{k-1}{3}+4\binom{k-2}{3}-14\]
\end{enumerate}
The values of the previous bounds on $\dim C_k^2(\Delta)$ for $k\leq 9$ are given in the following table. The first row shows the values obtained using the lower bound formula from \cite{boundsTriva}.

\medskip
\medskip

{
\hspace{0.7cm}\begin{tabular}{|l|r|r|r|r|r|r|r|r|r|}
  \hline
  $k$&\quad 1&\quad 2&\quad 3&\quad 4&\quad 5&\quad 6&\quad 7&\quad 8&\quad 9 \\
  \hline\hline
  Lower bound \cite{boundsTriva} & {\small 4} & {\small 10} &{\small  20} & {\small 35} &{\small  56} &{\small  84} &{\small  120} & {\small 179} & {\small 261}\\
 Lower bound & {\small 4} & {\small 10} & {\small 20} & {\small 35} & {\small 56} & {\small 84} & {\small 123} & {\small 187} & {\small 282}\\  
 Upper bound & {\small 4} & {\small 10} & {\small 20} & {\small 36} & {\small 58} & {\small 90} & {\small 136} & {\small 200} & {\small 286}\\  [-0.15ex]
 
  \hline
  
\end{tabular}
}

\medskip
\medskip

\begin{rem}The examples above illustrate the improvement that our lower and upper bounds provide with respect to previous results in the literature.  Furthermore, as we showed in the last example, the formulas we presented here might be combined with results obtained by using different techniques leading thus to sharper bounds, and in many cases to the exact dimension of the space. 
\end{rem}

\bibliography{mourrain-villamizar}

\subsection*{Acknowledgment}
The two authors would like to acknowledge the
support of the EU–FP7 Initial Training Network SAGA: ShApes, Geometry and Algebra (2008-2012).

\end{document}